\newcommand{\R}{\mathbb{R}}
\newcommand{\C}{\mathbb{C}}
\newcommand{\PP}{\mathbb{P}}
\begin{document}

\title*{How to flatten a soccer ball}
\author{Kaie Kubjas, Pablo A. Parrilo and  Bernd Sturmfels}
\institute{Kaie Kubjas \at Dept.~of Mathematics and Systems Analysis, Aalto University, Finland, \email{kaie.kubjas@aalto.fi}
\and Pablo A. Parrilo \at Laboratory for Information and Decision Systems,
Massachusetts Institute of Technology,
Cambridge, USA, \email{parrilo@mit.edu}
\and Bernd Sturmfels \at Dept.~of Mathematics,
University of California, Berkeley, USA, \email{bernd@berkeley.edu}
}
%
%
\maketitle

\abstract*{This is an experimental case study in real algebraic geometry, aimed at
computing the image of  a semialgebraic subset of $3$-space
under a polynomial map into the plane.
For general instances, the boundary of the image  is given by two 
highly singular curves. We determine these curves and show how they demarcate
the ``flattened soccer ball''.
We explore cylindrical algebraic decompositions, by working through concrete examples.
Maps onto convex polygons and connections to convex optimization are also discussed.}

\abstract{This is an experimental case study in real algebraic geometry, aimed at
computing the image of  a semialgebraic subset of $3$-space
under a polynomial map into the plane.
For general instances, the boundary of the image  is given by two 
highly singular curves. We determine these curves and show how they demarcate
the ``flattened soccer ball''.
We explore cylindrical algebraic decompositions, by working through concrete examples.
Maps onto convex polygons and connections to convex optimization are also discussed.}

\begin{figure}[h]
\centering\includegraphics[width=\textwidth]{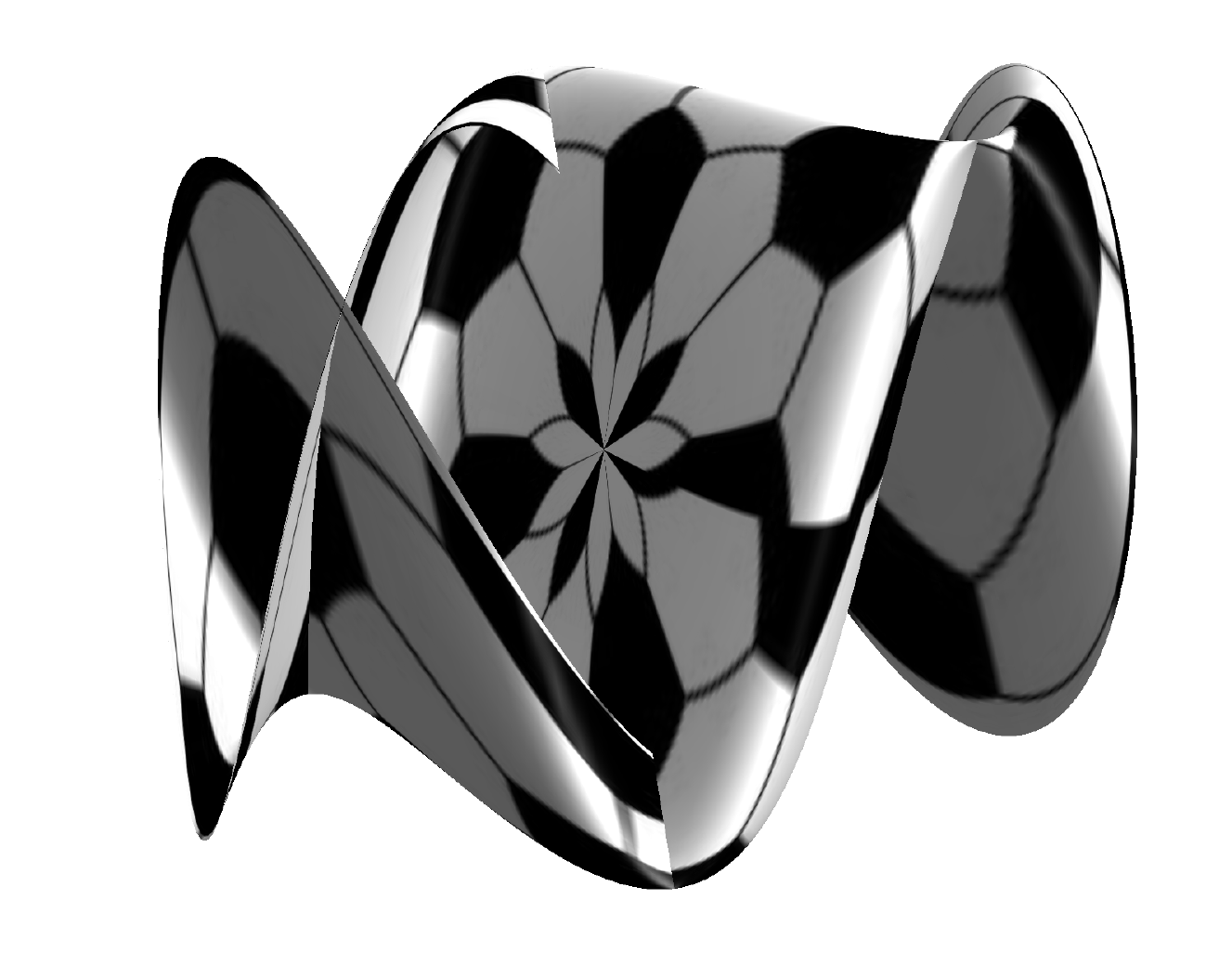}
\caption{\label{fig:truesoccer} A soccer ball is flattened 
and folded into a square.}
\end{figure}

\section{Introduction}
Computational tools  for real algebraic geometry have numerous applications.
This article offers a case study, focused on the following very simple scenario.
We consider a compact semialgebraic subset of real $3$-space that is
defined by one polynomial $h$ in three variables:
\begin{equation}
\label{eq:B}
 \mathcal{B} \,\, = \,\,  \bigl\{\, (u,v,w) \in \R^3 \,:\, h(u,v,w) \geq 0  \,\bigr\}. 
 \end{equation}
We think of $\mathcal{B}$ as our ``soccer ball''. 
A {\em flattening} of $\mathcal{B}$ is its image under a polynomial map
\begin{equation}
\label{eq:phi}
 \phi \,: \,\R^3 \rightarrow \R^2, \,\,(u,v,w) \,\mapsto \, \bigl( \,f(u,v,w) ,\, g(u,v,w)\, \bigr) . 
 \end{equation}
Using quantifiers, the ``flattened soccer ball'' can be expressed as 
$$
 \phi(\mathcal{B}) \, = \,
\bigl\{ (x,y) \in \R^2 \,: \, \, \begin{matrix} \exists\, u,v,w\,:\,
x = f(u,v,w)\,\, \hbox{and} \,\,y = g(u,v,w)  \,\,\hbox{and}  \,\, h(u,v,w) \geq 0 \,\bigr\}.
\end{matrix}  
$$

By Tarski's theorem on quantifier elimination, the image is a semialgebraic set in the plane $\R^2$,
so it can be described as a Boolean combination of polynomial
inequalities. Cylindrical algebraic decomposition \cite{Collins} 
can be used to compute a quantifier-free representation.
This is an active research area and several implementations are available
\cite{BDEMW, Brown, HE, LMX}.
Our aim is to explore the main ingredients in such a representation of
$\phi(\mathcal{B})$.
A  related problem is the computation of the convex hull
${\rm conv}(\phi(\mathcal{B}))$, whose boundary points represent
optimal points for the optimization problem
of maximizing $\lambda f + \mu g $ over $\mathcal{B}$,
where $\lambda, \mu$ are parameters.

This project started in November 2014 at the Simons Institute for the Theory of Computing
in Berkeley, during the workshop 
{\em Symbolic and Numerical Methods for Tensors and Representation Theory}.
The following example was part of its ``Algebraic Fitness Session''.

\begin{figure}[h]
\centering\includegraphics[height=7cm]{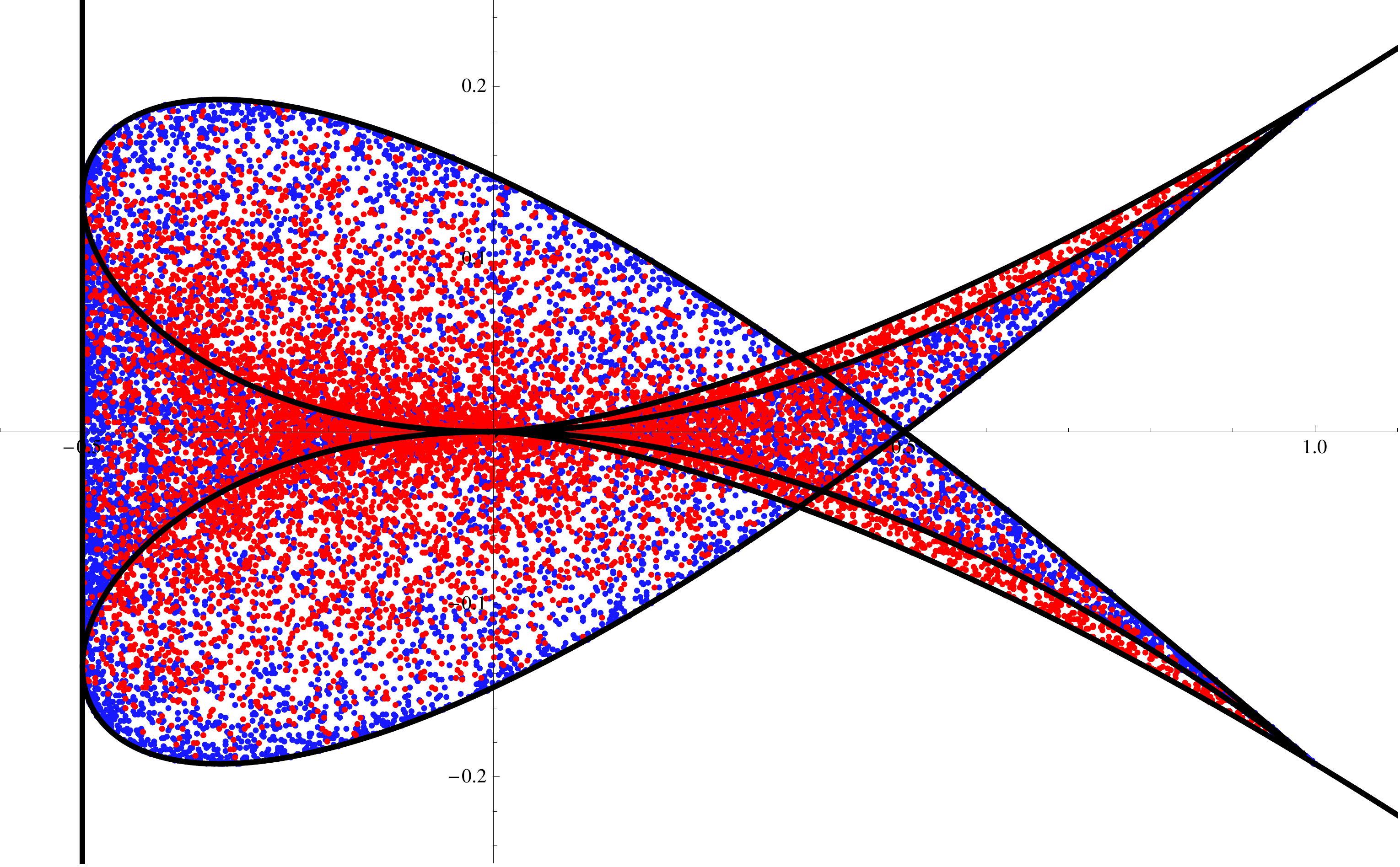}
\caption{Flattening of the unit  ball $\mathcal{B}$ under the map
  $(u,v,w) \mapsto (u v + v w + u w, u v w)$.  Red points are
  randomly sampled from the interior of $\mathcal{B}$, and blue
  points are sampled from the boundary of $\mathcal{B}$.
\label{fig:one}}
\end{figure}

\begin{example}  \label{ex:fitness}
Consider the map given by the two elementary symmetric polynomials,
\[
\phi \,: \,\R^3 \rightarrow \R^2, \,\,(u,v,w) \,\mapsto \, \bigl( \, u v + v w + u w ,\, u v w \, \bigr) . 
\]
We seek to compute the image under $\phi$ of the unit ball
\begin{equation}
\label{eq:unitball}
 \mathcal{B} \,\, = \,\,  \bigl\{\, (u,v,w) \in \R^3 \,:\, u^2+v^2+w^2 \leq 1 \,\,\}.
 \end{equation}
The flattened soccer ball $\phi(\mathcal{B})$  is the 
compact region in $\R^2$ that is depicted in Figure \ref{fig:one}.
In particular, $\phi(\mathcal{B})$ is not convex. If the second coordinate $uvw$
were replaced by a homogeneous quadric then $\phi(\mathcal{B})$ would be convex,
by a theorem of Brickman \cite{Brickman}.


We can quickly get an impression of the flattened ball $\phi(\mathcal{B})$
by sampling points from the ball $\mathcal{B}$
and plotting their images in $\R^2$. These are the red points
in Figure~\ref{fig:one}. We next sample points from the
sphere $\partial \mathcal{B}$ and we plot these in blue.
Figure~\ref{fig:one} shows the existence of two small regions with many
red points but no blue points at all. This means that the image
of the sphere is strictly contained in the image of the ball. In symbols,
$\,\phi(\partial \mathcal{B}) \subset  \phi(\mathcal{B})$.
The Zariski closure of the boundary of  the image $\phi(\mathcal{B})$
is given  by the
 polynomials
$$ p =  x^{3}-27 y^{2} \,\, {\rm and} \,\,
q = (2x{+}1)(4 x^{6}{-}4 x^{5}{-}92 x^{3} y^{2}{+}x^{4}{+}6x^{2} y^{2}
{+}729 y^{4}{+}48 x y^{2}{-}16 y^{2})
$$
Here, $p$ vanishes on the red boundary,
while $q$ vanishes on the blue boundary.
\hfill $\diamondsuit$
\end{example}

For a triple $(f,g,h)$ of polynomials in 
$\R[u,v,w]$, representing the pair $(\mathcal{B},\phi)$,
we define the {\em algebraic boundary} of
$\phi(\mathcal{B})$ to be the Zariski closure in $\C^2$ of
the topological boundary of $\phi(\mathcal{B})$.
In addition to  $\mathcal{B}$ being compact, we  also 
assume that $\mathcal{B}$ is regular, i.e.~the closure of the interior of 
$\mathcal{B}$ contains $\mathcal{B}$. This excludes examples
where lower-dimensional pieces stick out, like  the Whitney umbrella.
With these hypotheses, we can apply results in real algebraic 
geometry, found in~\cite[Lemma~3.1]{KKKR} and~\cite[Lemma~4.2]{Sinn},
to conclude that the algebraic boundary is pure of dimension $1$ in $\C^2$.
It is defined by the product 
of two squarefree polynomials $p$ and $q$ in $\R[x,y]$.
The curve $V(p)$ is the branch locus of the map $\phi$ itself.
It depends only on $f$ and $g$ but not on $h$.
The curve $V(q)$ is the branch locus of the
restriction of $\phi$ to the surface $V(h)$. It depends on $h$.
Note that $q$ is reducible in Example~\ref{ex:fitness}. 

This paper is organized as follows.
In Section~\ref{sec:two} we study the algebraic
geometry underlying our problem.
If the data $f,g,h$ are generic polynomials
then  the curves $V(p)$ and $V(q)$ are irreducible.
We determine their Newton polygons and singularities.
In Section~\ref{sec:three} we explore the
global topology of the flattened soccer ball $\phi(\mathcal{B})$.
We present upper and lower bounds on the
number of connected components in its complement.
Section~\ref{sec:four} introduces tools from symbolic computation for deriving 
an exact representation of $\phi(\mathcal{B})$.
Section~\ref{sec:five} offers 
connections to convexity and to sum-of-squares techniques in polynomial optimization.

\section{Algebraic Curves}
\label{sec:two}

A standard approach in algebraic geometry is to focus
on the generic instance in a family of problems.
This then leads to  an upper bound for
the algebraic complexity of the output that is valid for 
all special instances. In what follows we pursue that standard approach.

Suppose that $f,g$ and $h$ are generic inhomogeneous polynomials
of degrees $d_1,d_2$ and $e$ in $\R[u,v,w]$.
The soccer ball $\mathcal{B}$ and the map $\phi$ are
defined as in (\ref{eq:B}) and~(\ref{eq:phi}).
Let $p$ denote the squarefree polynomial that defines the branch locus of $\phi$,
and let $q$ be the squarefree polynomial that defines the branch locus of $\phi_{\{h=0\}}$.
These polynomials are unique up to scaling. 
They represent the algebraic boundary of $\phi(\mathcal{B})$.
Both curves are in fact irreducible:

\begin{theorem}\label{theorem:generic_degree_soccer}
For generic polynomials $f,g,h$ in $\R[u,v,w]$,  the boundary polynomials $p$ and $q$
of the flattened soccer ball $\phi(\mathcal{B})$ are irreducible. Their Newton polygons are the triangles
$$
\begin{small}
\begin{matrix}
 {\rm Newt}(p) & = & \!\! D_p\cdot {\rm conv}\{(0,0), (0,d_1),(d_2,0)\}
&{\rm where} \!\! &  D_p = d_1^2+d_1d_2+d_2^2-3d_1-3d_2+3;  \smallskip \\
 {\rm Newt}(q) & = & \!\! D_q \cdot {\rm conv}\{(0,0), (0,d_1),(d_2,0)\}  
 &{\rm where} \!\! & \,\,\,\, D_q  =   e (d_1+d_2+e-3) . \qquad \qquad  \qquad
 \end{matrix}
 \end{small}
 $$
 The irreducible complex curves $V(p)$ and $V(q)$ are highly singular,
 with genera
 $$ 
 \begin{small}
 \begin{matrix} 
 {\rm genus}(p) & = &
 \frac{1}{2}(2d_1^3+3d_1^2d_2+3d_1d_2^2+2d_2^3-13d_1^2-16d_1d_2-13d_2^2+27d_1+ 27d_2-20),
\smallskip \\
 {\rm genus}(q) & = & 
 \frac{1}{2}(d_1^2e+2d_1d_2 e+3d_1e^2+d_2^2e+3d_2e^2+2e^3-10d_1e{-}10d_2e-13e^2{+}21e{+}2).
 \end{matrix}
 \end{small}
 $$
 The numbers of singular points of these curves in
 the complex affine plane $\C^2$ are
$$
\begin{small}
\begin{matrix}
 \# {\rm Sing}(V(p)) &=&
 \frac{1}{2}\bigl(\,(D_p \cdot d_1-1) (D_p \cdot d_2-1)\,-\,D_p \cdot {\rm gcd}(d_1,d_2)+1\,\bigr) \,-\, {\rm genus}(p), 
 \smallskip \\
 \# {\rm Sing}(V(q)) & = &
  \frac{1}{2} \bigl(\,(D_q \cdot d_1-1) (D_q \cdot d_2-1)\,-\,D_q \cdot {\rm gcd}(d_1,d_2)+1\,\bigr) \,-\, {\rm genus}(q).
 \end{matrix}
 \end{small}
 $$
\end{theorem}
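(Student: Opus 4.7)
The plan is to reduce all four claims of the theorem --- irreducibility, Newton polygons, genera, and singularity counts --- to intersection-theoretic computations on two auxiliary critical curves in $\C^3$. Let $C_p$ denote the rank-$\leq 1$ locus of the $2 \times 3$ Jacobian matrix of $(f,g)$, cut out by its three $2 \times 2$ minors; and let $C_q$ be the intersection of $V(h)$ with the surface $\{\det \mathrm{Jac}(f,g,h) = 0\}$, of degree $d_1 + d_2 + e - 3$ in $\C^3$. For generic $f, g, h$, Bertini-type theorems for complete intersections (for $C_q$) and for determinantal loci (for $C_p$) ensure that both curves are smooth and irreducible, and that $\phi$ restricts to a birational morphism on each. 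This gives irreducibility of $p$ and $q$ and identifies $V(p) = \overline{\phi(C_p)}$ and $V(q) = \overline{\phi(C_q)}$.

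The degree of $C_q$ in $\PP^3$ is $e(d_1 + d_2 + e - 3) = D_q$ by Bezout, while the degree of $C_p$ follows from the Thom--Porteous formula applied to the sheaf map
\begin{equation*}
J \,:\, \mathcal{O}_{\PP^3}^{\,\oplus 3} \,\longrightarrow\, \mathcal{O}_{\PP^3}(d_1-1) \,\oplus\, \mathcal{O}_{\PP^3}(d_2-1),
\end{equation*}
whose rank-$\leq 1$ class is $c_1(F{-}E)^2 - c_2(F{-}E) = \bigl[(d_1+d_2-2)^2 - (d_1-1)(d_2-1)\bigr]H^2 = D_p H^2$, yielding $\deg(C_p) = D_p$. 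The Newton polygon of $p$ is then determined by three Bezout counts. Birationality of $\phi|_{C_p}$ gives $\deg_y p = \#(C_p \cap \{f = x_0\}) = d_1 D_p$ and $\deg_x p = d_2 D_p$ for generic level sets, pinning down the two axis vertices of the triangle; and the weighted degree $w(p) = d_1 d_2 D_p$ --- with weights $(d_1, d_2)$ on $(x, y)$ --- follows from $\#(C_p \cap \{g^{d_1} = \lambda f^{d_2}\}) = d_1 d_2 D_p$ and pins down the third vertex. Since these three bounds jointly cut out exactly the claimed triangle (and force the vertices to lie in the support of $p$), $\mathrm{Newt}(p)$ is the stated triangle; the same three counts handle $\mathrm{Newt}(q)$.

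For the genera, $C_q$ is a complete intersection of degrees $e$ and $d_1+d_2+e-3$ in $\PP^3$, so the adjunction formula gives $2\,\mathrm{genus}(C_q) - 2 = D_q(d_1+d_2+2e-7)$, which expands to the claimed polynomial. For $C_p$, the Hilbert--Burch resolution
\begin{equation*}
0 \to \mathcal{O}(-2d_1{-}d_2{+}3) \oplus \mathcal{O}(-d_1{-}2d_2{+}3) \to \mathcal{O}(-d_1{-}d_2{+}2)^{\oplus 3} \to \mathcal{O}_{\PP^3} \to \mathcal{O}_{C_p} \to 0
\end{equation*}
computes $\chi(\mathcal{O}_{C_p})$ and hence the arithmetic genus, which equals the geometric genus by smoothness of $C_p$. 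Birationality of $\phi$ on the critical curves identifies these with $\mathrm{genus}(p)$ and $\mathrm{genus}(q)$.

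Finally, Khovanskii's formula identifies the arithmetic genus of a plane curve with triangular Newton polygon with the number of interior lattice points of the triangle; a direct Pick's theorem computation rewrites this as $\bigl((D_p d_1 - 1)(D_p d_2 - 1) - D_p\gcd(d_1,d_2) + 1\bigr)/2$ for $V(p)$, and similarly for $V(q)$. Combined with the standard identity $p_a - p_g = \sum_P \delta_P$, this yields the singularity counts of the theorem, provided each singularity of $V(p), V(q)$ is an ordinary node so that $\delta_P = 1$. Establishing this nodal genericity is the expected main obstacle: it requires a transversality argument on the double-point locus $\bigl\{(P, Q) \in C_p \times C_p \,:\, \phi(P) = \phi(Q),\, P \neq Q\bigr\}$ (and its analogue for $C_q$), ruling out cusps, tacnodes, and higher singularities by a Bertini-style dimension count on the parameter space of $(f, g, h)$.
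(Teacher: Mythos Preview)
Your overall architecture coincides with the paper's: both introduce the same pair of critical curves in $\C^3$ (the paper calls them $C_1, C_2$), compute their degrees via Thom--Porteous and B\'ezout, identify the genera of $V(p), V(q)$ with those of the smooth space curves, and read off the node count as arithmetic genus minus geometric genus. The genus computations are equivalent --- the paper phrases them as Hilbert-series facts, you phrase them as adjunction and Hilbert--Burch --- and the interior-lattice-point formula for the arithmetic genus is the same in both.

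There are two genuine methodological differences worth noting.

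\textbf{Newton polygons.} The paper computes the tropicalization of $\phi(C_i)$ via geometric tropicalization, obtaining the three rays $(-d_1 r, -d_2 r)$, $(d_1 r, 0)$, $(0, d_2 r)$ directly; this pins down the Newton triangle in one stroke. Your route via intersection counts is more elementary and essentially works, but as written it has a small gap: the three bounds you list determine the vertices $(D_p d_2, 0)$ and $(0, D_p d_1)$, but nothing in your argument forces $(0,0)$ into the support of $p$. You need the separate (easy) observation that for generic $f,g$ the curve $C_p$ misses $V(f)\cap V(g)$, so $p(0,0)\neq 0$. Also, your weighted-degree count via $\{g^{d_1}=\lambda f^{d_2}\}$ should be handled with a little care when $\gcd(d_1,d_2)>1$, since that hypersurface then splits into components.

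\textbf{Nodality.} You flag this as ``the expected main obstacle'' and propose a direct transversality argument on the double-point scheme. The paper bypasses this entirely: since $f,g$ are generic, $\phi|_{C_i}$ is a generic projection of a smooth irreducible space curve, and it invokes the classical fact (citing Johnsen) that a generic plane projection of such a curve has only ordinary nodes. That shortcut is considerably cleaner than carrying out the dimension count by hand, and you should be aware of it.
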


\smallskip

In this statement, ${\rm genus}(p)$ denotes the genus of the Riemann surface
that is obtained by resolving the singularities of the curve $V(p)$. Equivalently,
this is the {\em geometric genus}. 
The proof of Theorem~\ref{theorem:generic_degree_soccer}
 realizes the plane curves $V(p)$ and $V(q)$ as generic projections
of smooth curves in $3$-space. This implies that all  their singular points
are nodes (cf.~\cite{Joh}), and these are counted by the difference
between the arithmetic genus and the geometric genus.

Table \ref{tab:curves}  underscores how singular our curves are. For instance, the last row concerns
 a general map $\phi$  of degree $4$. The branch locus $V(q)$ of that map restricted
  to the boundary surface $V(h)$
 has degree $56$. A general plane curve of that same degree  has
genus $1485$. However, the genus of our curve $V(q)$ is only $36$, so it
 has $1485-36 = 1449$ singular points.
\begin{table}[h]
$$
\begin{matrix}
(d_1 , d_2 , e) \,& \,{\rm degree}(p) & {\rm genus}(p) & \# {\rm Sing}(V(p)) \,\,&\,\,
 {\rm degree}(q) & {\rm genus}(q) & \# {\rm Sing}(V(q)) \\
 (1, 2, 2) &  2 & 0 & 0 &  8 & 1 & 8 \\
(1, 3, 2) & 12 & 1 & 14 & 18 & 4 & 36 \\
(2, 2, 2) & 6 & 0 & 10 &  12 & 4 & 51 \\
(2, 3, 2) &  21 & 5 & 122 & 24 & 9 & 160 \\
(2, 4, 2) & 52 & 21 & 604 & 40 & 16 & 345 \\
(3, 3, 2) & 36 & 17 & 578 & 30 & 16 & 390 \\
(3, 4, 2) &  76 & 43 & 2048 & 48 & 25 & 792 \\
(4, 4, 2) & 108 & 82 & 5589 & 56 & 36 & 1449 
 \end{matrix}
 \vspace{-0.15in}
 $$
 \caption{\label{tab:curves}  The numerical values in Theorem~\ref{theorem:generic_degree_soccer}
 for input polynomials of low degree.}
 \end{table}

From the polygon ${\rm Newt}(p)$ in Theorem~\ref{theorem:generic_degree_soccer}
we see that the curve $V(p)$ has degree $D_p \cdot {\rm max}(d_1,d_2)$,
and similarly for $V(q)$. When the input polynomials $f,g,h$
of degrees $d_1,d_2,e$ are not generic but special,
these numbers serve as an upper bound. We take the sum of these numbers to get

\begin{corollary}
For any  $f,g,h$, the algebraic boundary of $\phi(\mathcal{B})$ 
has degree at most 
$$ \bigl(d_1^2+d_1d_2+d_2^2-3d_1-3d_2+3 \,+\,  e (d_1+d_2+e-3) \bigr) \cdot {\rm max}(d_1,d_2). $$
This bound is tight when the polynomials $f,g,h$ are generic relative to their  degrees.
\end{corollary}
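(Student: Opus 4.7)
The plan is to read off the generic degrees of $p$ and $q$ from the Newton polygons in Theorem~\ref{theorem:generic_degree_soccer}, and then promote the resulting generic equality into an inequality for arbitrary input via a semicontinuity argument. Recall from the paragraph preceding Theorem~\ref{theorem:generic_degree_soccer} that the algebraic boundary of $\phi(\mathcal{B})$ is $V(pq)$, where $V(p)$ is the branch curve of $\phi$ and $V(q)$ is the branch curve of the restriction $\phi|_{V(h)}$; its degree is therefore bounded above by $\deg(p) + \deg(q)$.

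By Theorem~\ref{theorem:generic_degree_soccer}, the generic Newton polygons of $p$ and $q$ are triangles with vertices $(0,0), (0, D_p d_1), (D_p d_2, 0)$ and $(0,0), (0, D_q d_1), (D_q d_2, 0)$. Since the total-degree function $a+b$ attains its maximum on such a triangle at a vertex, the generic degrees are exactly $D_p \cdot \max(d_1, d_2)$ and $D_q \cdot \max(d_1, d_2)$; summing them yields the stated expression, which both proves the bound for generic input and confirms its tightness there. To extend the bound to arbitrary $(f, g, h)$, I would realize $p$ and $q$ as factors of the elimination polynomials produced by eliminating $u, v, w$ from the ideals $(x - f,\, y - g,\, M_1, M_2, M_3)$, where $M_1, M_2, M_3$ are the $2 \times 2$ minors of ${\rm Jac}(f, g)$, and $(x - f,\, y - g,\, h,\, \det {\rm Jac}(f, g, h))$, respectively. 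The coefficients of these elimination polynomials depend polynomially on the coefficients of $f, g, h$, so their Newton polygons vary upper-semicontinuously; the special Newton polygons lie inside the corresponding generic triangles, and the degrees of $p$ and $q$ remain bounded by the generic values.

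The main obstacle is making the semicontinuity step precise. One must verify that the elimination construction really behaves polynomially in the input coefficients even at special inputs where the critical scheme can jump in dimension or acquire embedded components, and that the honest branch polynomial is always a factor of the elimination output. Both concerns are standard to address by working on the universal family over the parameter space of coefficient tuples, saturating to strip extraneous base-locus components before specializing, and observing that the branch polynomial is in every case a divisor of the projected resultant.
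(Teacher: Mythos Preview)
Your proposal is correct and follows essentially the same approach as the paper: read off $\deg(p)=D_p\cdot\max(d_1,d_2)$ and $\deg(q)=D_q\cdot\max(d_1,d_2)$ from the Newton triangles of Theorem~\ref{theorem:generic_degree_soccer}, then sum and invoke semicontinuity for special inputs. In fact the paper treats the corollary as an immediate consequence of the paragraph preceding it and does not spell out the semicontinuity step at all, so your elimination-ideal sketch is already more detailed than what the paper provides.
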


\begin{remark} \rm
If $ d_1 \leq d_2 = 2$ and $e$ is arbitrary then
the branch curve $V(p)$ of the map $\phi$ has
genus $0$. This  means the curve admits a parametrization by
rational functions.
\end{remark}

The two cases given in the third and fourth row of Table \ref{tab:curves} will be of most interest 
to us. For each of them, we may assume that $\mathcal{B}$ is the 
unit ball (\ref{eq:unitball}), but $\phi$ is arbitrary.

\begin{example}
If we flatten  the unit ball
(\ref{eq:unitball}) via a quadratic map ($d_1 = d_2 = 2$) then
the branch locus of $\phi$ is the rational sextic curve $V(p)$, with
$10$ singular points.  The branch curve of the restriction
of $\phi$ to $V(h)$ is the curve $V(q)$ of degree $12$ and genus $4$,
so it has $51$ singular points. These two curves make up the boundary of $\phi(\mathcal{B})$. 

If both $f$ and $g$ are homogeneous quadrics then the image of 
 $\mathcal{B}$ under $\phi$ is convex. This follows from \cite[Theorem 2.1]{Brickman}. More
precisely, $\phi(\mathcal{B})$ is a spectrahedral shadow, bounded by a
curve of degree six.  This scenario corresponds to the case $p =n=3$
in Table 1 of \cite{SiSt}.
The image $\phi(\mathcal{B})$ is generally not convex when
one of the quadrics $f,g$ is not homogeneous.
For instance, the image of the unit ball under the map
$(u,v) \mapsto ( u^2-v, v^2)$ is not convex.
\hfill $\diamondsuit$
\end{example}

\begin{example}
Let $d_1 = 2, d_2 =3$ as in Example \ref{ex:fitness},
but with $f$ and $g$ generic. The picture of
 $\phi(\mathcal{B})$ is now much more complicated than that 
  in Figure \ref{fig:one}. The red boundary $V(p)$
 is a curve of degree $21$ with $122$ complex singular points,
 and the blue boundary $V(q)$ is a curve of degree $24$ with
 $160$ complex singular points.
 This is worked out in Example~\ref{example:generic_example_algorithm}.
 \hfill $\diamondsuit$ 
\end{example}

\begin{proof}[Proof of Theorem~\ref{theorem:generic_degree_soccer}]
We consider two curves in affine $3$-space $\C^3$.
The curve $C_1$ is defined by the $2 \times 2$-minors
of the Jacobian matrix of $(f,g)$ with respect to $(u,v,w)$.
This $2 \times 3$-matrix has general entries
of degree $d_1-1$ in the first row and general entries of
degree $d_2-1$ in the second row. By the Thom-Porteous-Giambelli Formula,
we have ${\rm deg}(C_1) = (d_1{-}1)^2 + (d_1{-}1)(d_2{-}1) + (d_2{-}1)^2 $.
This expression equals $D_p$.
 The curve $C_2$ is the complete intersection defined
by the polynomial $h$, which has degree $e$,
and the Jacobian determinant of $(f,g,h)$ with respect to $(u,v,w)$,
 which has degree $d_1+d_2+e-3$.
By B\'ezout's Theorem, ${\rm deg}(C_2) = e (d_1+d_2+e-3)$. The hypothesis that $f,g$ and $h$
are generic ensure that  $C_1$ and $C_2$ are smooth and irreducible.
Their degrees are the quantities $D_p$ and $D_q$ in the
statement.

Both of the results from algebraic geometry that were used in the previous paragraph
(Thom-Porteous-Giambelli and B\'ezout)
require certain genericity hypotheses on the geometric data
to which they apply. These hypotheses are satisfied in our case
because the given polynomials $f$, $g$ and $h$
are assumed to have generic coefficients.
See e.g.~\cite[Section 3.5.4]{Manivel}.

The curves defined by $p$ and $q$ are 
the images of $C_1$ and $C_2$ 
under the map $\phi = (f,g)$
from $\C^3$ to $\C^2$.
Our first claim states that, for $i=1,2$, the Newton polygon
of the plane curve is the triangle
$r \cdot {\rm conv}\{(0,0), (0,d_1),(d_2,0)\}  $,
where $r = {\rm deg}(C_i)$.

We prove this using {\em tropical geometry} \cite{MS}.
 By genericity of $f,g$ and $h$,
the tropical curve ${\rm trop}(C_i)$ in $\R^3$ is the $1$-dimensional fan
with rays spanned by $(1,0,0)$, $(0,1,0)$, $(0,0,1)$ and $(-1,-1,-1)$,
where each ray has multiplicity $r$.
Our goal is to compute the tropical curve ${\rm trop}(\phi(C_i))$ in $\R^2$.
This contains the image of ${\rm trop}(C_i)$ under the
tropicalization of the map $\phi$. This is the piecewise-linear map
that takes $(U,V,W)$ in $\R^3$ to 
$\,\bigl({\rm min} \{d_1 U, d_1 V, d_1 W, 0\},
{\rm min}\{ d_2 U, d_2 V, d_2 W, 0 \} \bigr)$. Its image
is the weighted ray in $\R^2$ spanned by $(-d_1 r ,-d_2 r)$. The other rays
of the tropical curve ${\rm trop}(\phi(C_i))$ arise from the
points of $C_i$ at which $f$ and $g$ vanish. We derive these
using the method of Geometric Tropicalization, specifically
\cite[Theorem 6.5.11]{MS}. The relevant very affine curve
is $C_i \backslash \{uvw fg = 0 \}$, and the normal crossing 
boundary in the SNC pair is the divisor defined by $uvwfg$ on $C_i$.

The surface $\{f = 0\}$ meets the curve $C_i$ in $ d_1 r$ points,
and the divisorial valuations at these points map to the 
weighted ray $(d_1 r,0)$ in $\R^2$. Likewise, the surface $\{g=0\}$ meets $C_i$
in $d_2r$ points, and their divisorial valuations create the 
weighted ray $(0,d_2 r)$ in $\R^2$. Hence the tropical plane curve
${\rm trop}(\phi(C_i))$ consists of the three weighted rays specified by
$(-d_1 r ,-d_2 r)$, $(d_1r, 0)$ and $(0,d_2 r)$.
This implies our assertion about the Newton polygons of $p$ and $q$.

To prove the second assertion, 
about the genera of the two curves in question,
we use the following two facts about general curves in $\PP^3$.
These are easily derived by computing the Hilbert series and then reading off the Hilbert polynomial.
Recall that, for a curve with the Hilbert polynomial $h(n)=h_1n+h_0$, the degree is $h_1$ and the arithmetic genus is $1-h_0$. Moreover, if the curve is smooth, then its geometric genus equals the arithmetic genus.
\begin{itemize}
\item A smooth space curve defined by the $2 \times 2$-minors
of a $2 \times 3$-matrix with rows of degrees $a$ and $b$ has
degree $a^2+ab+b^2$ and genus
$a^3+\frac{3}{2}a^2b+\frac{3}{2}ab^2+b^3-2a^2-2ab-2b^2+1$.
\item The complete intersection of two general surfaces of degrees $a$ and $b$
 in $\PP^3$ is a smooth curve of
degree $ab$ and  genus $\frac{1}{2}ab(a+b-4)+1$.
\end{itemize}

The genus of the plane curve $V(p)$ is equal to the genus of the space curve
$C_1$ that maps to it, and similarly for $V(q)$ and $C_2$. So, it suffices
to compute the genera of the affine curves $C_1$ and $C_2$ in $\C^3$.
We may work with their projective closures $\overline{C}_1$ and $\overline{C}_2$
in $\PP^3$. The curve $\overline{C}_1$ has the
determinantal representation as in the first bullet, with 
$a = d_1-1$ and $b=d_2-1$. Substitution yields the desired formula for ${\rm genus}(p)$.
 The curve $\overline{C}_2$ is the complete
intersection of two surfaces in $\PP^3$, of degree $a = e$ and $b = d_1+d_2+e-3$.
Substituting these expressions into $\frac{1}{2}ab(a+b-4)+1$, we obtain the desired
formula for ${\rm genus}(q)$. 

We can regard $V(p)$ and $V(q)$
as curves in the weighted projective plane
given  by the known Newton polygons.
 The genus of a general curve
of the same degree is the number of interior lattice points on the Newton triangle.
That number is equal~to
\begin{small}
$$
\# \bigl(\mathbb{Z}^2 \cap {\rm int}({\rm conv}\{(0,0),(0,rd_1),(rd_2,0)\} ) \bigr) \,=\,
\frac{1}{2} \bigl(\,(r d_1-1) (r  d_2-1)\,-\, {\rm gcd}(rd_1,rd_2)+1\,\bigr).
$$ \end{small} 
Here $r$ is $D_p$ or $D_q$ as before. The number of singular points is
the number above minus the  genus of the curve.
This gives the count in 
the last assertion of Theorem~\ref{theorem:generic_degree_soccer}.
\end{proof}

We used the computer algebra system {\tt Macaulay2}  \cite{M2}
to verify some of the entries in Table~\ref{tab:curves}.
Here is the {\tt Macaulay2} code we used for a typical computation with $d_1 = d_2 = e = 2$:
\begin{verbatim}
S = QQ[x,y,u,v,w];  h = u^2+v^2+w^2-1;
f = u*v-u*w+7*v^2+v*w+5*w^2+u+v+w+1;
g = u^2-u*v+u*w-v^2+v*w-w^2+u-v+w-1;
C1 = minors(2,jacobian(ideal(f,g)));
C2 = minors(3,jacobian(ideal(f,g,h)))+ideal(h);
p = first first entries gens 
                 eliminate({u,v,w},C1+ideal(x-f,y-g))
Ip = radical(ideal(diff(x,p),diff(y,p),p));
{degree p, # terms  p, degree Ip}
q = first first entries gens 
                 eliminate({u,v,w},C2+ideal(x-f,y-g))
Iq = radical(ideal(diff(x,q),diff(y,q),q));
{degree q, # terms  q, degree Iq}
\end{verbatim}
The polynomials {\tt p} and {\tt q} have
 degrees $6$ and $12$ respectively.
 The command {\tt \# terms} verifies 
 that all monomials in the Newton polygons appear
 with non-zero coefficients.
The singular loci of the two curves
are given by their radical ideals, {\tt Ip} and {\tt Iq}.
Applying the command {\tt degree} to these ideals verifies that
the number of singular points is $10$ and $51$ respectively.

\section{Topological Complexity}
\label{sec:three}

When a soccer ball gets flattened, one generally expects the planar image to be
simply connected. However, it is quite possible for $\phi(\mathcal{B})$ to have holes.
In other words, the complement $\R^2 \backslash \phi(\mathcal{B})$
can have two or more connected components. In this section we present
an explicit construction that makes this happen, 
with the number of holes being arbitrarily large.

The number of connected components of $\phi(\mathcal{B})$ is at most
the number of connected components of $\mathcal{B}$. 
The number of its holes is counted by the first Betti number of $\phi(\mathcal{B})$.
The best upper bounds for Betti numbers of
 compact semialgebraic sets are due to Basu and Riener~\cite[Theorem 10]{BR2} and 
 Basu and Rizzie~\cite[Theorem 27]{BR}. 
In our setting,  the number of holes is bounded by
   $O( \max(d_1,d_2)^6 e^2)$ if $e \leq \max(d_1,d_2)$ and by $O( \max(d_1,d_2)^8)$ otherwise.

In what follows we assume that $\mathcal{B}$ is the unit ball (\ref{eq:unitball}).
 The image $\phi(\mathcal{B})$ is a compact connected subset of $\R^2$.
We are interested in maps $\phi$ whose image $\phi(\mathcal{B})$ is not simply connected.
The construction we shall give furnishes   
   the lower bound $O(d_1 d_2)$ on the number of holes of $\phi(\mathcal{B})$. Based on
   {\em Lissajous curves}, it  gives rise to some  beautiful explicit examples.

The {\em Chebyshev polynomials (of the first kind)}
are defined recursively by 
$$ T_0(t)\,=\,1,\,\, T_1(t)\,=\,t \,\,\,
{\rm and} \,\,\, T_{d+1}(t)\,=\,2tT_d(t)-T_{d-1}(t) \quad
\hbox{for $d \geq 1$}. $$
Explicitly, the Chebyshev polynomials are
$$ T_2(t) = 2 t^2 - 1, \,\, T_3(t) = 4t^3 - 3 t, \,\, T_4(t) = 8 t^4 -  8 t^2 + 1 , \,\, 
T_5(t) = 16 t^5 - 20 t^3 + 5 t ,\,\,\ldots . $$
They satisfy the trigonometric identity
 $\,  {\rm cos}(d \theta) = T_d( {\rm cos}(\theta))$.
 Fix relatively prime positive integers $d_1$ and $d_2$ with $d_1 < d_2$.
Let $\mathcal{L}_{d_1,d_2}$ denote the  Lissajous curve 
\begin{equation}
\label{eq:trigpara}
x = {\rm cos}(d_1 \theta) \,, \,\,\, y = {\rm cos}(d_2 \theta). 
\end{equation}
Its Zariski closure is the curve of degree $d_2$ with polynomial parametrization
$$ x = T_{d_1}(t)\,,\,\, y = T_{d_2}(t). $$
For instance, Lissajous curve  $\mathcal{L}_{2,3}$ is the rational cubic 
$\{4 x^3 - 2y^2 - 3 x + 1 = 0\}$. It is singular at
$(x,y) = \bigl(\frac{1}{2},0 \bigr)$.

\begin{example}
Figure \ref{figure:Lissajous57} shows the Lissajous curve $\mathcal{L}_{5,7}$.
This curve has $12$ singular points. This is the number of bounded regions
in the complement of $\mathcal{L}_{5,7}$ in $\R^2$.
\end{example}

\begin{figure}[h]
\centering
\includegraphics[width=0.4\textwidth]{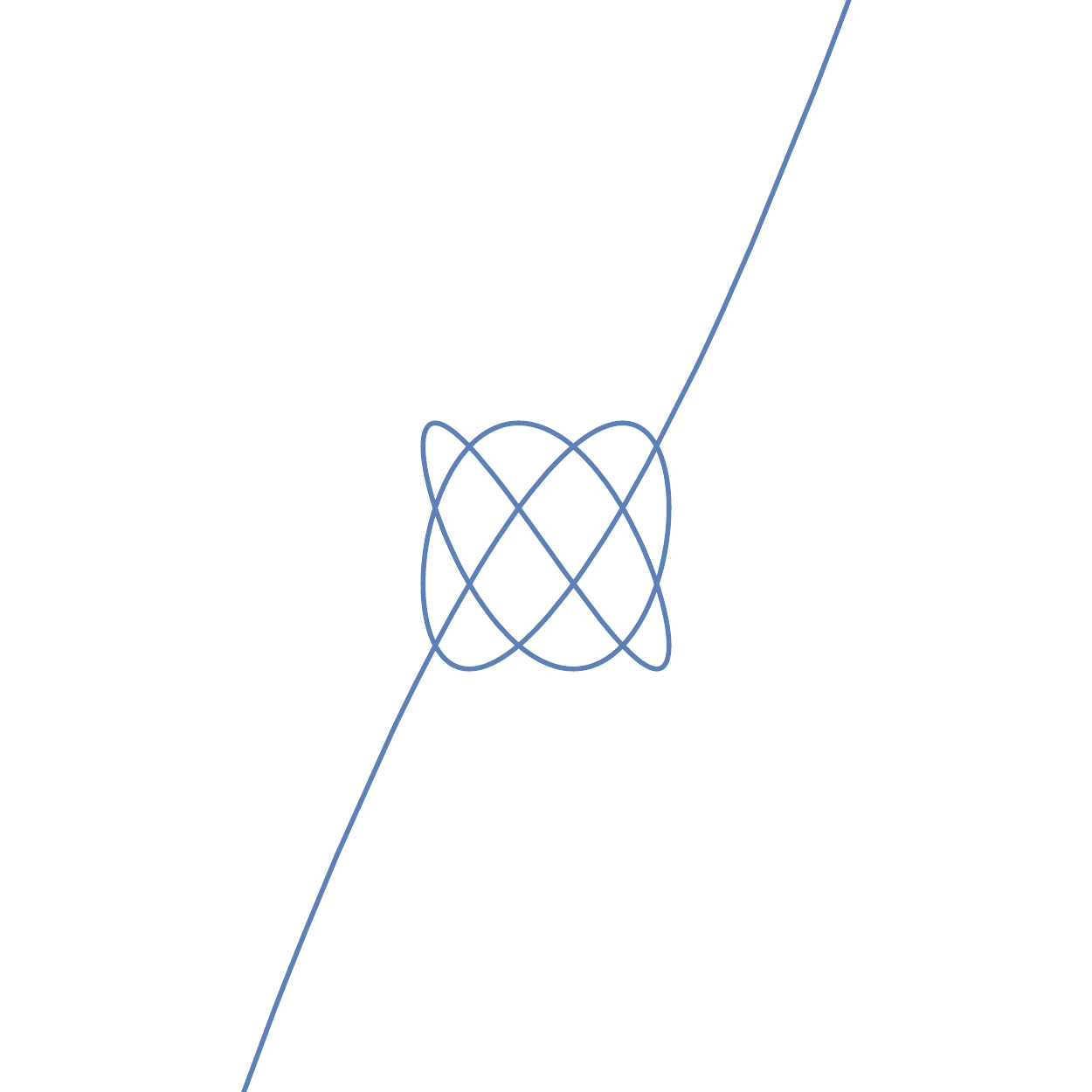}
\caption{The Lissajous curve $\mathcal{L}_{5,7}$. \label{figure:Lissajous57}}
\end{figure}

\begin{lemma}
The curve $\mathcal{L}_{d_1,d_2}$ has
precisely $\frac{(d_1-1)(d_2-1)}{2}$ complex singular points.
All of these are real and are attained by two distinct
values of $\,\theta$ in the
trigonometric parametrization~(\ref{eq:trigpara}).
\end{lemma}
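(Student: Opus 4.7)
\emph{Plan.}  My plan is to reduce the statement to a trigonometric counting problem using the Chebyshev identity $T_d(\cos\theta) = \cos(d\theta)$.  First, I would show that the polynomial parametrization $\psi : \C \to \mathcal{L}_{d_1,d_2}$, $t \mapsto (T_{d_1}(t), T_{d_2}(t))$, is an unramified immersion, so that its singular image is exactly the set of points with at least two preimages.  The zeros of $T'_d$ are the $d-1$ real numbers $\cos(k\pi/d)$ with $1 \le k \le d-1$, and a common zero of $T'_{d_1}$ and $T'_{d_2}$ would require $\cos(k_1\pi/d_1) = \cos(k_2\pi/d_2)$, forcing $k_1 d_2 \equiv \pm k_2 d_1 \pmod{2 d_1 d_2}$, which is incompatible with $\gcd(d_1, d_2) = 1$ in the given range of $k_j$.

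Next I would characterize the fibers trigonometrically.  Writing $t_i = \cos\theta_i$ with $\theta_i \in \C/2\pi\mathbb{Z}$, the equalities $T_{d_j}(t_1) = T_{d_j}(t_2)$ for $j = 1, 2$ become $\theta_1 - \theta_2 \in \frac{2\pi}{d_j}\mathbb{Z}$ or $\theta_1 + \theta_2 \in \frac{2\pi}{d_j}\mathbb{Z}$.  Coprimality of $d_1, d_2$ collapses the two matching-sign combinations to $\theta_1 \equiv \pm \theta_2 \pmod{2\pi}$, and hence $t_1 = t_2$.  So, after possibly swapping $t_1 \leftrightarrow t_2$ (which on the $\theta$ side amounts to $\theta_2 \to -\theta_2$), any fiber with $t_1 \ne t_2$ comes from the mixed system
\begin{equation*}
\theta_1 - \theta_2 \,=\, \frac{2\pi k}{d_1}, \qquad \theta_1 + \theta_2 \,=\, \frac{2\pi \ell}{d_2}, \qquad (k, \ell) \in \mathbb{Z}^2,
\end{equation*}
so $\theta_1 = \pi(k d_2 + \ell d_1)/(d_1 d_2)$ and $\theta_2 = \pi(-k d_2 + \ell d_1)/(d_1 d_2)$ are rational multiples of $\pi$, and the corresponding image point $\psi(t_1) \in \R^2$ is automatically real.

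The heart of the proof is the counting.  Two parameters $(k, \ell)$ define the same class in $(\C/2\pi\mathbb{Z})^2$ precisely when their difference lies in the sublattice $T' = \{(d_1 m, d_2 n) : m + n \equiv 0 \pmod 2\}$, which has index $2 d_1 d_2$ in $\mathbb{Z}^2$.  Using inclusion-exclusion, the classes with $\theta_1 \equiv \theta_2$ (that is, $k \equiv 0 \pmod{d_1}$, yielding $2 d_2$ classes) together with those with $\theta_1 \equiv -\theta_2$ (that is, $\ell \equiv 0 \pmod{d_2}$, yielding $2 d_1$ classes, with an overlap of size $2$) account for $2(d_1 + d_2 - 1)$ bad classes with $t_1 = t_2$.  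The remaining $2(d_1 - 1)(d_2 - 1)$ classes give ordered $(\theta_1, \theta_2)$ representing genuine singular pairs.  Dividing by $2$ for the involution $(\theta_1, \theta_2) \leftrightarrow (-\theta_1, -\theta_2)$ within the mixed case (free on the good locus, since any fixed point would require $t_1, t_2 \in \{\pm 1\}$ distinct, forcing both $d_1, d_2$ even) and by another $2$ to pass to unordered pairs $\{t_1, t_2\}$, I arrive at exactly $(d_1 - 1)(d_2 - 1)/2$ unordered fiber pairs.

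Finally, I would rule out triple preimages so that these pairs biject with the singular points.  If $\psi(t_1) = \psi(t_2) = \psi(t_3)$ with $t_1, t_2, t_3$ pairwise distinct, then applying the mixed-case description to $(t_1, t_2)$ and $(t_1, t_3)$ and taking differences of the defining relations yields $\theta_3 \equiv \pm \theta_2 \pmod{2\pi}$ (with the sign depending on whether the two pairs fall in the same or in opposite branches of the mixed system), forcing $t_3 = t_2$.  This establishes the claimed count of $(d_1-1)(d_2-1)/2$ singular points, each real and attained by exactly the two values $\theta_1, \theta_2 \in [0, \pi]$ of the trigonometric parametrization corresponding to its two preimages.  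The main obstacle I foresee is the modular bookkeeping in the counting step: correctly identifying the sublattice $T'$ and its index $2 d_1 d_2$, choosing a clean fundamental domain, and verifying that both the $\pm$-involution and the unordered-pair identification act freely on the good locus, so that the two divisions by $2$ are honest.
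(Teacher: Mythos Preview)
Your argument is correct, and it takes a genuinely different route from the paper's proof.  The paper argues by matching an upper bound with a lower bound: the Newton polygon of $\mathcal{L}_{d_1,d_2}$ lies in the triangle with vertices $(0,0)$, $(d_2,0)$, $(0,d_1)$, whose $\frac{(d_1-1)(d_2-1)}{2}$ interior lattice points bound the number of singular points of a rational curve with that Newton polygon; then it exhibits that many explicit real nodes via the angles $\theta' = \pi(k/d_1 + \ell/d_2)$ and $\theta'' = |\pi(\ell/d_2 - k/d_1)|$ and appeals to the $2$-to-$1$ nature of the trigonometric parametrization to get the count.  Your approach instead enumerates \emph{all} fiber coincidences directly: you first show $\psi$ is an immersion (so singularities are precisely multi-points), then classify all solutions of $\psi(t_1)=\psi(t_2)$ over~$\C$ via a lattice count modulo $T'$, and finally rule out triple points.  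This is more elementary in that it avoids the Newton-polygon/genus input altogether, and it yields the extra information that every singularity is an ordinary node with exactly two branches.  The paper's argument is shorter but leans on the arithmetic-genus bound and leaves the ``$2$-to-$1$ hence divide by two'' step somewhat informal; your bookkeeping with the sublattice $T'$ of index $2d_1d_2$ and the two free involutions makes that division rigorous.

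One small wording slip: the passage between the two mixed cases is effected by $\theta_2 \to -\theta_2$ (which fixes $t_2$), not by swapping $t_1 \leftrightarrow t_2$ as your parenthetical says; swapping $\theta_1 \leftrightarrow \theta_2$ keeps you in the same mixed case.  Since you state the correct operation $\theta_2 \to -\theta_2$ explicitly, this is cosmetic and does not affect the argument.
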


\begin{proof}
By the same argument as in the proof of Theorem~\ref{theorem:generic_degree_soccer},
the Newton polygon of the  Lissajous curve $\mathcal{L}_{d_1,d_2}$
is contained in the triangle with vertices $(0,0)$, $(d_2,0)$ and $(0,d_1)$.
The number of interior lattice points of that triangle is
 $\frac{(d_1-1)(d_2-1)}{2}$. This is the genus of the 
 generic curve with that Newton polygon. And,     it hence is an upper bound on the number
 of complex singular points of the special curve   $\mathcal{L}_{d_1,d_2}$.
 
We next exhibit $\frac{(d_1-1)(d_2-1)}{2}$ real singular points
on $\mathcal{L}_{d_1,d_2}$ that are in the image of~(\ref{eq:trigpara}).
Pick any $k \in \{ 1,\ldots,d_1-1 \}$ and any $l \in \{1,\ldots,d_2-1\}$.
Consider the angles
\begin{equation}
\label{eq:thetaprime}
\theta'=\pi(\frac{k}{d_1} + \frac{l}{d_2})\quad \hbox{and} \quad
\theta''=\left|\pi(\frac{l}{d_2}-\frac{k}{d_1})\right|.
\end{equation}
If $\frac{l}{d_2}-\frac{k}{d_1}>0$, then $\theta' - \theta'' = \frac{2k\pi}{d_1}$ and
$\theta' + \theta'' = \frac{2l \pi}{d_2}$; otherwise
$\theta' - \theta'' = \frac{2l \pi}{d_2}$ and
$\theta' + \theta'' = \frac{2k\pi}{d_1}$.
This means that $\theta'$ and $\theta''$ map to the same
point, and the Lissajous curve $\mathcal{L}_{d_1,d_2}$ has a node
at that point.
There are $(d_1-1)(d_2-1)$ choices of pairs $(k,l)$.
Since the trigonometric parametrization~(\ref{eq:trigpara}) is
$2$-to-$1$ on the interval $[0,2\pi]$, this creates 
$\frac{(d_1-1)(d_2-1)}{2}$  nodal singularities on $\mathcal{L}_{d_1,d_2}$.
This argument is a modification of~\cite[Section~2.1]{BHJS}.
The lower bound we derived matches the upper bound in the previous paragraph,
and this completes the proof.
\end{proof}

We now apply this to flattening the soccer ball.
Consider the map  $\phi = (f,g)$ with
\begin{equation}
f(u,v,w) = T_{d_1}(u) + \epsilon \cdot v ,\qquad
g(u,v,w) = T_{d_2}(u) + \epsilon \cdot w, 
\label{eq:lissajous}
\end{equation}
where $T_d(\cdot)$ is the degree-$d$ Chebyshev polynomial,
  and $\epsilon>0$ is a small constant.
The map $\phi$  takes the soccer ball $\mathcal{B}$ and creates
a two-dimensional image with many holes in $\R^2$.

\begin{example} \label{ex:zweidreiliss} 
Let $d_1 = 2$ and $d_2 = 3$. The set $\phi(\mathcal{B})$
is the region shown in Figure~\ref{figure:Lissajous23}. It has precisely one hole.
This picture was created by the following code in {\tt Mathematica},
which produces a huge expression:
\begin{small}
\begin{verbatim}
h = 1 - (u^2 + v^2 + w^2);
f = 2*u^2 - 1 + 1/10*v;  g = 4*u^3 - 3*u + 1/10*w;
S = Exists[{u, v, w}, h >= 0 && x == f && y == g];
SR = Resolve[S, Reals]
RegionPlot[SR, {x, -1.2, 1.2}, {y, -1.2, 1.2}, PlotPoints -> 50]
\end{verbatim}
\end{small}
The command ``{\tt Resolve}'' performs quantifier elimination.
\end{example}

\begin{figure}[h]
\centering
\includegraphics[width=0.4\textwidth]{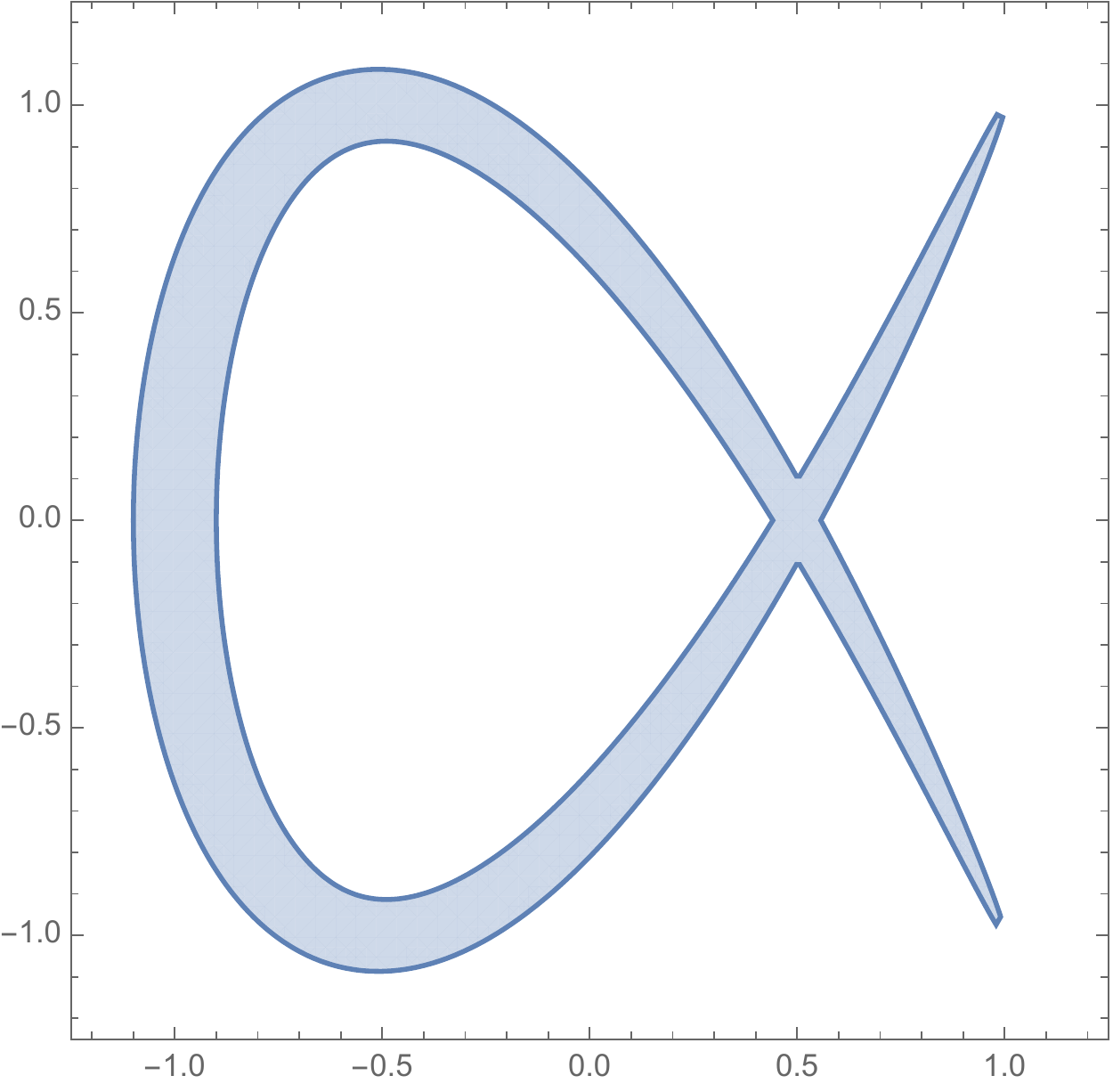}
\caption{The modification of the Lissajous curve in Example~\ref{ex:zweidreiliss}.}\label{figure:Lissajous23}
\end{figure}

The following is our main result in this section. 

\begin{theorem}
Let $d_1<d_2$ be relatively prime 
and $\phi$ as above with
$\epsilon>0$ sufficiently
small. Then, the complement of $\,\phi(\mathcal{B})\,$ in $\R^2$ has
$\frac{(d_1-1)(d_2-1)}{2}+1$ connected components.
The algebraic boundary of $\phi(\mathcal{B})$ is an irreducible curve
of degree at most $4d_2-2$. It is the branch locus of $\phi_{\{h=0\}}$, so it is
defined by the polynomial that was denoted by $q$ in Theorem~\ref{theorem:generic_degree_soccer}.
\end{theorem}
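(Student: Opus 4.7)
The plan is to handle the topological count of complement components and the algebraic description of the boundary curve as two essentially independent steps, exploiting the very concrete form of the map $\phi$.

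\textbf{Topological count.} For each fixed $u\in[-1,1]$, the slice of $\mathcal{B}$ is the disk $\{v^2+w^2\le 1-u^2\}$, and $\phi$ translates-and-scales it to the closed disk of radius $\epsilon\sqrt{1-u^2}$ centered at the Lissajous point $(T_{d_1}(u),T_{d_2}(u))$. Hence
\[
\phi(\mathcal{B})\;=\;\bigcup_{u\in[-1,1]}\overline{D}\bigl((T_{d_1}(u),T_{d_2}(u)),\,\epsilon\sqrt{1-u^2}\bigr),
\]
a tapered $\epsilon$-tube around the Lissajous arc $\mathcal{L}_{d_1,d_2}$ that shrinks to a point at each of the two endpoints $(T_{d_1}(\pm 1),T_{d_2}(\pm 1))$. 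For $\epsilon$ smaller than half the minimum distance between disjoint sub-arcs of $\mathcal{L}_{d_1,d_2}$, this tube deformation-retracts onto $\mathcal{L}_{d_1,d_2}$, so $\R^2\setminus\phi(\mathcal{B})$ has the same number of connected components as $\R^2\setminus\mathcal{L}_{d_1,d_2}$. Viewing $\mathcal{L}_{d_1,d_2}$ as an embedded planar graph, the preceding lemma gives $V=(d_1-1)(d_2-1)/2+2$ vertices (the $4$-valent nodes plus the two $1$-valent endpoints), while the parameter interval $[-1,1]$ is cut by the $(d_1-1)(d_2-1)$ node-preimages into $E=(d_1-1)(d_2-1)+1$ sub-arcs (the edges). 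Euler's formula $V-E+F=2$ yields $F=(d_1-1)(d_2-1)/2+1$, as claimed.

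\textbf{Algebraic boundary.} The Jacobian of $(f,g)$,
\[
\begin{pmatrix} T_{d_1}'(u)&\epsilon&0\\T_{d_2}'(u)&0&\epsilon\end{pmatrix},
\]
has $2\times 2$ minor $\epsilon^2\ne 0$ from its last two columns, so $\phi$ has no critical points in $\C^3$ and $V(p)=\emptyset$. The algebraic boundary is therefore entirely $V(q)$. The Jacobian determinant of $(f,g,h)$ equals $-2\epsilon(vT_{d_1}'(u)+wT_{d_2}'(u))$, so the critical curve on the sphere is $C_2=V(h)\cap V(J)$ with $J=vT_{d_1}'(u)+wT_{d_2}'(u)$. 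Because $\gcd(d_1,d_2)=1$, the roots $\{\cos(k\pi/d_i):0<k<d_i\}$ of $T_{d_1}'$ and $T_{d_2}'$ are disjoint, so $J$ is irreducible in $\C[u,v,w]$, $V(J)$ is smooth, and a direct tangency check shows $V(h)$ and $V(J)$ meet transversely outside a finite set. Hence $C_2$ is an irreducible smooth complete-intersection curve and its image $V(q)=\overline{\phi(C_2)}$ is an irreducible plane curve.

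\textbf{Degree bound.} Parametrize $C_2$ birationally by solving $J=0$ as $(v,w)=s\cdot(T_{d_2}'(u),-T_{d_1}'(u))$; substituting into $h=0$ gives the plane curve $C_2'\colon s^2R(u)+u^2-1=0$ with $R=(T_{d_1}')^2+(T_{d_2}')^2$, and $\phi$ becomes $(u,s)\mapsto(T_{d_1}(u)+\epsilon sT_{d_2}'(u),\,T_{d_2}(u)-\epsilon sT_{d_1}'(u))$. Pulling back a generic line $ax+by+c=0$ yields $A(u)+\epsilon sB(u)=0$ with $A=aT_{d_1}+bT_{d_2}+c$ of degree $d_2$ and $B=aT_{d_2}'-bT_{d_1}'$ of degree $d_2-1$. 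Eliminating $s$ via $s^2R=1-u^2$ produces
\[
A(u)^2\,R(u)\;-\;\epsilon^2(1-u^2)\,B(u)^2\;=\;0,
\]
a polynomial of degree $2d_2+(2d_2-2)=4d_2-2$ in $u$. Since the two sheets of $C_2'$ over a given $u$ are separated by $\phi$, the parametrization is birational, and the roots of this polynomial are in bijection with the intersection points of the generic line with $V(q)$. Thus $\deg V(q)\le 4d_2-2$.

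I expect the two most delicate steps to be (i) making the deformation-retract quantitative — the minimum distance between disjoint pieces of $\mathcal{L}_{d_1,d_2}$ is positive since $\mathcal{L}_{d_1,d_2}$ is compact with only nodal singularities, so any sufficiently small $\epsilon$ works — and (ii) rigorously verifying irreducibility of $C_2$, for which the quickest route is via $C_2'$: this is irreducible exactly when $R(u)(1-u^2)$ is not a square in $\C[u]$, and this is guaranteed by the simple root of the factor $1-u^2$ at $u=\pm 1$.
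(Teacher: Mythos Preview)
Your topological argument is correct and is essentially the paper's: both view $\mathcal{L}_{d_1,d_2}$ as a planar graph, apply Euler's formula, and argue that a sufficiently thin tube around it has the same complement topology. Your edge count, obtained by cutting the parameter interval at the $(d_1-1)(d_2-1)$ node preimages, is a clean direct route to~$E$.

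The algebraic half contains a computational error that propagates. The Jacobian determinant of $(f,g,h)$ is not $-2\epsilon\bigl(vT_{d_1}'+wT_{d_2}'\bigr)$; as the paper records, it equals
\[
2\epsilon\,v\,T_{d_1}'(u)\;+\;2\epsilon\,w\,T_{d_2}'(u)\;-\;2\epsilon^2 u .
\]
You have dropped the term produced by the entry $\partial h/\partial u$ against the $\epsilon^2$ minor from the last two columns. Hence the critical locus on the sphere is $J=vT_{d_1}'+wT_{d_2}'-\epsilon u=0$, and your parametrization $(v,w)=s\,(T_{d_2}',-T_{d_1}')$ solves the wrong equation. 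The plane model $C_2'\colon s^2R(u)+u^2-1=0$ is therefore not the correct curve, and your non-square criterion for its irreducibility no longer applies as stated.

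The paper avoids parametrizing $C_2$: it inverts $\phi$ in the $(v,w)$ variables via $v=(x-T_{d_1}(u))/\epsilon$ and $w=(y-T_{d_2}(u))/\epsilon$, substitutes into $h$ and into the correct Jacobian determinant to obtain two polynomials in $(u,x,y)$, and eliminates $u$ by a Sylvester resultant whose entry pattern yields the bound $4d_2-2$.

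Your generic-line approach is a genuine alternative and can be repaired. With the correct $J$, the pair of equations $av+bw=-A(u)/\epsilon$ and $T_{d_1}'v+T_{d_2}'w=\epsilon u$ is still linear in $(v,w)$; solving by Cramer's rule and substituting into $h=0$ gives, after clearing denominators,
\[
A(u)^2R(u)\;-\;\epsilon^2(1-u^2)\,B(u)^2\;+\;2\epsilon^2 u\,A(u)A'(u)\;+\;(a^2+b^2)\,\epsilon^4 u^2\;=\;0.
\]
The two additional terms have degree at most $2d_2$ in $u$, so the leading part $A^2R$ of degree $4d_2-2$ is unchanged and your bound survives. Irreducibility of $C_2$, however, now requires a separate argument, since your non-square trick depended on $J$ being homogeneous linear in $(v,w)$.
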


\begin{proof}
The part of the curve $\mathcal{L}_{d_1,d_2}$
that lies in the square $[-1,1]^2$ is compact.
We regard this as an embedded planar graph,
where the vertices are the nodal singularities
given in~(\ref{eq:thetaprime}) together with the two degree $1$ endpoints, and the edges
are the pieces of the Lissajous  curve that connect the nodes and endpoints.
This planar graph is $4$-valent, the numbers
of vertices, edges and faces satisfy
$v-e+f = 2$ and $2 e = 4(v-2)+2$. 
This implies $ f = v-1 = \frac{(d_1-1)(d_2-1)}{2}+1$,
i.e.~the Lissajous curve has the correct number of holes.

As $\epsilon $ increases from $0$ to being positive,
the curve gets replaced by a two-dimensional region. But
the number of holes in the complement does not change.

The algebraic boundary of $\phi(\mathcal{B})$ is given by the
polynomials $p$ and $q$ that describe the branch curves of
$\phi$ and $\phi_{\{h=0\}}$ respectively. However, in the present case, the curve $V(p)$
does not exist because the Jacobian of the map $\phi$
has rank $2$ for all $(u,v,w) \in \C^3$.
The Jacobian determinant of $(f,g,h)$ with respect to $(u,v,w)$ is
the irreducible polynomial
\begin{equation}
\label{eq:Jacodet}
 {\rm det} \begin{pmatrix}
\, \partial T_{d_1}/\partial u & \epsilon & 0 \\
\, \partial T_{d_2}/\partial u & 0 & \epsilon \\
-2 u & -2v & -2w \,\\
\end{pmatrix} \quad = \quad 
2v \epsilon \frac{\partial T_{d_1}}{\partial u} + 
2w \epsilon \frac{\partial T_{d_2}}{\partial u} - 2 \epsilon^2 u.
\end{equation}
This is a polynomial of degree $d_2$ in which 
 $v$ and $w$ occur linearly. The intersection of 
this surface with the unit sphere is an irreducible curve of degree at most $2d_2$. 
To compute the image of the curve, we substitute
 $\,v = \frac{1}{\epsilon}(x-T_{d_1}(u)) \,$ and
 $\,w = \frac{1}{\epsilon}(y-T_{d_2}(u))\,$ into
 $h(u,v,w)$ and into (\ref{eq:Jacodet}).
 This results in two polynomials in $u,x,y$.
Our  task is to eliminate $u$. We do this by taking the determinant of the
 Sylvester matrix with respect to $u$. The non-constant entries in the Sylvester matrix
have degree one or two in $x$ or $y$. By examining their pattern in the matrix, 
we find that the determinant is a polynomial of degree at most $4d_2-2$.
\end{proof}

\begin{remark}
We found experimentally that the Newton polygon of $q$ is the triangle with vertices
$(0,0)$, $(4d_2-2,0)$ and $(0,2d_1+2d_2-2)$, but we could not prove this.
\end{remark}

\begin{example} \label{ex:fitnessMathematica} 
We return to the flattened soccer ball seen in the introduction.
To draw this picture from scratch in {\tt Mathematica}, we run
the code in Example~\ref{ex:zweidreiliss}, modified as follows:
\begin{verbatim}
f = u*v + v*w + u*w;  g = u*v*w;
\end{verbatim}
For this input, the output of the quantifier elimination command {\tt Resolve} equals:
\begin{footnotesize}
\begin{verbatim}
(-(1/2) <= x <= 0 && y == 0) || 
   (y == -(1/(3*Sqrt[3])) && x == -(1/3)) || 
   (-(1/(3*Sqrt[3])) < y < 0 && 
      Root[2*#1^3 + #1^2 - y^2 & , 1] <= x <= 
        Root[2*#1^3 + #1^2 - y^2 & , 2]) || 
   (y == 0 && Inequality[-(1/2), LessEqual, x, Less, 
        0]) || (0 < y < 1/(3*Sqrt[3]) && 
      Root[2*#1^3 + #1^2 - y^2 & , 1] <= x <= 
        Root[2*#1^3 + #1^2 - y^2 & , 2]) || 
   (y == 1/(3*Sqrt[3]) && x == -(1/3)) || 
   (x == -(1/2) && -(1/(3*Sqrt[6])) <= y <= 
        1/(3*Sqrt[6])) || (-(1/2) < x < -(1/3) && 
      Root[729*#1^4 + #1^2*(-(92*x^3) + 6*x^2 + 48*x - 
                   16) + 4*x^6 - 4*x^5 + x^4 & , 1] <= y <= 
        Root[729*#1^4 + #1^2*(-(92*x^3) + 6*x^2 + 48*x - 
                   16) + 4*x^6 - 4*x^5 + x^4 & , 4]) || 
   (x == -(1/3) && -(1/(3*Sqrt[3])) <= y <= 
        1/(3*Sqrt[3])) || 
   (-(1/3) < x < (1/38)*(5*Sqrt[5] - 7) && 
      Root[729*#1^4 + #1^2*(-(92*x^3) + 6*x^2 + 48*x - 
                   16) + 4*x^6 - 4*x^5 + x^4 & , 1] <= y <= 
        Root[729*#1^4 + #1^2*(-(92*x^3) + 6*x^2 + 48*x - 
                   16) + 4*x^6 - 4*x^5 + x^4 & , 4]) || 
   (x == (1/38)*(5*Sqrt[5] - 7) && -Sqrt[2*x^3 + x^2] <= 
        y <= Sqrt[2*x^3 + x^2]) || 
   ((1/38)*(5*Sqrt[5] - 7) < x < 16/43 && 
      Root[729*#1^4 + #1^2*(-(92*x^3) + 6*x^2 + 48*x - 
                   16) + 4*x^6 - 4*x^5 + x^4 & , 1] <= y <= 
        Root[729*#1^4 + #1^2*(-(92*x^3) + 6*x^2 + 48*x - 
                   16) + 4*x^6 - 4*x^5 + x^4 & , 4]) || 
   (16/43 <= x <= 1/2 && -(Sqrt[x^3]/(3*Sqrt[3])) <= 
        y <= Sqrt[x^3]/(3*Sqrt[3])) || 
   (1/2 < x < 1 && (-(Sqrt[x^3]/(3*Sqrt[3])) <= y <= 
           Root[729*#1^4 + #1^2*(-(92*x^3) + 6*x^2 + 48*x - 
                      16) + 4*x^6 - 4*x^5 + x^4 & , 2] || 
         Root[729*#1^4 + #1^2*(-(92*x^3) + 6*x^2 + 48*x - 
                      16) + 4*x^6 - 4*x^5 + x^4 & , 3] <= y <= 
           Sqrt[x^3]/(3*Sqrt[3]))) || 
   (x == 1 && (y == -(1/(3*Sqrt[3])) || 
         y == 1/(3*Sqrt[3])))
\end{verbatim}
\end{footnotesize}
This is a quantifier-free formula for
the flattened soccer ball in Figure~\ref{fig:one}.
Most readers will find such an output hard to understand.
The next section offers an alternative.
\end{example}

\section{Exact Representation of the Image}
\label{sec:four}

Quantifier elimination for polynomial systems over $\R$
is usually performed by cylindrical algebraic decomposition~\cite{Collins}, abbreviated CAD.
Many variants can be found in the recent literature,
including {\em truth table invariant CAD} \cite{BDEMW}
and {\em variant quantifier elimination}  \cite{HE}.
CAD represents a semialgebraic set
as a union of cells. In dimension one this would be
 a disjoint union of points and open intervals.
 Several implementations of CAD are now available,
 including {\tt QEPCAD}~\cite{Brown}, and the packages
  {\tt RegularChains}~\cite{LMX} and {\tt ProjectionCAD}~\cite{EWBD} in {\tt Maple}.
  In Example~\ref{ex:fitnessMathematica},
 we experimented with the implementation of CAD  in {\tt Mathematica}.
 
This section is purely expository, aimed at all
mathematicians and their students. We show how to obtain
a meaningful CAD ``by hand'' for all instances with parameters $d_1=2,d_2=3,e=2$.
 Experts and CAD developers might find this useful as a family of test cases.

Consider the curve in $\R^2$  defined by the polynomial $\,p\cdot q$.
Our image $\phi(\mathcal{B})$ is the closure of
a union of connected components of its complement.
We compute a partition of $\R^2$ that refines the partition
given by $V(pq)$. A key step is to label each
open piece in the finer partition.
We then test which pieces lie in $\phi(\mathcal{B})$,
and we report the labels of those that do.

\begin{algorithm}[h!]
\caption{$\,\,$ Quantifier-free representation of the flattened soccer ball
$\phi(\mathcal{B})$}\label{algorithm:semialgebraic_sampling}
\begin{algorithmic}
\State {\bf Input}: Three polynomials $f,g,h \in \mathbb{Q}[u,v,w]$.
\State           \textit{\textbf{Step 1}: Compute the polynomial $p$ that defines the branch locus of the map $\phi$.}
\State           \textit{\textbf{Step 2}: Compute the polynomial $q$ that defines the branch locus of the restriction of $\phi$ to the boundary surface $V(h)$ of $\mathcal{B}$.} 
\State  \textit{\textbf{Step 3}: Compute the
sorted  list $C$ of all critical $x$-values of the polynomial $p\cdot q$.}
\State  \textit{\textbf{Step 4}: Sample points uniformly from $\mathcal{B}$ and compute their images under the map $\phi$. Save the result in a list $S$.} 
\State  \textit{\textbf{Step 5}: For each $(\overline{x},\overline{y}) \in S$
determine $k \in \mathbb{N}$ such that $\overline{x}$ is between the $k$-th and $(k+1)$-st element of $C$. Compute sorted list $R$ of all real roots
of the univariate polynomial $p(\overline{x},y) \cdot q(\overline{x},y)$.
 Determine $l \in \mathbb{N}$ such that $\overline{y}$ is between the $l$-th and $(l+1)$-st root in $R$. 
 \smallskip }
\State  {\textbf{Output}: The polynomials $p,q$,
the list $C$, and the set of all pairs $(k,l)$ generated in Step~5.}
\end{algorithmic}
\end{algorithm}

Algorithm~\ref{algorithm:semialgebraic_sampling} describes what we do.
The geometric idea is to project $V(pq)$ onto the $x$-axis.
The critical points of that projection come in four flavors:
singular points of $V(p)$, singular points of $V(q)$, 
points in the intersection $V(p,q)$, and smooth points
on $V(pq)$ with vertical tangent lines.
The {\em critical $x$-values} are the $x$-coordinates of all real critical points.

Two consecutive critical $x$-values define a vertical strip.
Here, the behavior of the curve segments  does not change as $x$ varies.
In particular, curve segments do not cross or change direction. 
Curve segments pass from the left to the right over two consecutive
critical $x$-values, and they divide the vertical strip into open regions.
Two of them  are unbounded and hence irrelevant.
  Each bounded region is either  contained
in $\phi(\mathcal{B})$ or is disjoint from $\phi(\mathcal{B})$.

Our description of  $\phi(\mathcal{B})$ has three parts:
the polynomials $p$ and $q$ that define the algebraic boundary,
 the critical $x$-values, and a set of pairs of positive integers. 
 A pair $(k,l)$ determines a region in $\R^2$ as follows.
 The $x$-coordinates are between the  $k$-th and $(k+1)$-st critical $x$-value,
 and the $y$-coordinates lie between $l$-th and $(l+1)$-st curve segment in the $y$-direction.

Steps 1 and 2 of Algorithm~\ref{algorithm:semialgebraic_sampling} 
 can be done with
  \texttt{Macaulay2}, as shown  at the end of Section~\ref{sec:two}. 
  Step 3 is more delicate because $p$ and $q$ are fairly large,
  even when $f,g,h$ are small. The task is to compute
  the critical $x$-values of the polynomials $p$ and $q$, and
  also the $x$-coordinates of the common zeros of $p$ and $q$. 
  To find these $x$-values symbolically,
   we compute three resultants of two polynomials in
   $(x,y)$ with respect to $y$. Namely, we compute
\begin{equation}  
\label{eq:mustcompute}
{\rm Res}_y \bigl(p,\partial{p}/\partial{y}\bigr),\,\,
{\rm Res}_y \bigl(q,\partial{q}/\partial{y}\bigr)\,\,\, \hbox{and} \,\,\,
{\rm Res}_y \bigl(p,q \bigr).
\end{equation}

At this stage one might compute the real roots of these three univariate polynomials
in $x$. This can be done numerically using various methods, including the
numerical algebraic geometry package in {\tt Macaulay2}. However, we
did not  do this in our computations. Instead,
we identify the real roots of our three resultants purely symbolically,
using the command \texttt{Solve} with the option \texttt{Reals} in \texttt{Mathematica}. 
\texttt{Solve} tries to write each solution explicitly in terms of radicals, and if
unsuccessful, it creates a representation as a root of a polynomial.

Since $\mathcal{B}$ is compact, we can enclose it inside an appropriate cube in $\R^3$.
We sample points $(u,v,w)$ with rational coordinates from that cube,
and we throw out points that do not satisfy $h(u,v,w) \geq 0$. For instance, if $\mathcal{B}$ 
is the unit sphere, then we first uniformly sample points from the cube $[-1,1]^3$ and
 keep the points that satisfy $1-u^2-v^2-w^2 \geq 0$. 

Step 4 is  probabilistic. As the number of samples grows to infinity,
every region in the CAD of $\phi(\mathcal{B})$ will be reached and certified eventually.
For any finite number of samples, there is a positive probability that some
region is missed.
Our approach can 
be turned this into a deterministic method by selecting a sample point in each
region and deciding whether it has a preimage in $\mathcal{B}$. This 
amounts to testing whether a semialgebraic set in $\R^3$ is non-empty.
The best known algorithm for deciding non-emptiness of a semialgebraic set is by Renegar~\cite{Renegar}.
 In practice this can be done using implementations of CAD, but we decided not to pursue 
the deterministic variant in the present study.

In our examples we are also interested in deciding which regions of
$\phi(\mathcal{B})$ lie in $\phi(\partial \mathcal{B})$.  Note the distinction between
the colors in Figure \ref{fig:one}. For instance,
to obtain points on the boundary of the unit ball, we uniformly sample points from the square $[-1,1]^2$, keep the points that satisfy $1-u^2-v^2 \geq 0$, and create two boundary points with $w=\pm \sqrt{1-u^2-v^2}$. 

In Step 5, we use binary search to determine $k$ such that $\overline{x}$ 
is between the $k$-th and $(k+1)$-st element of 
the list $C$. The symbolic representation derived in
 \texttt{Mathematica} is suitable for doing this. To be precise, we did the binary search
 with the following commands:
\begin{verbatim}
Block[{$ContextPath}, Needs["Combinatorica`"]]
k = Combinatorica`BinarySearch[C, x]-1/2
\end{verbatim}
The real roots of $p(\overline{x},y) \cdot q(\overline{x},y)$ can again be computed using \texttt{Mathematica} command \texttt{Solve}.
We now illustrate Algorithm \ref{algorithm:semialgebraic_sampling}
by applying it to the instance that launched this project.

\begin{example}\label{example:fitness_example_with_our_algorithm} 
Fix $f,g,h$ as in Example \ref{ex:fitness}. 
The first part of the output are the polynomials $p$ and $q$ at the end of
Example \ref{ex:fitness}. The second part is the list of critical $x$-values:
 \begin{equation}
 \label{eq:criticalvalues}
 -\frac{1}{2}, \,0, \,\frac{16}{43}, \,\frac{2}{5}, \,\frac{1}{2}, \,1 .
 \end{equation}
 The third part is a list which 
 represents a partition of $\phi(\mathcal{B})$  into $22$ regions:
 \begin{align*}
&(1, 1), (1, 2), (1, 3), 
(2, 1), (2, 2), (2,3), (2, 4), (2, 5), 
(3, 1), (3, 2), (3,3),\\& (3, 4), (3, 5), 
(4, 1), (4, 2), (4,3), (4, 4), (4, 5), 
(5, 1), (5, 2), (5,4), (5, 5). 
\end{align*}
The output above represents a quantifier-free formula
for the flattened soccer ball. Using symbolic computation,
we can assign one of the $22$ labels to any sample point
$\phi(u_0,v_0,w_0)$. For instance, 
$\phi(\mathcal{B}) \backslash \phi(\partial \mathcal{B})$
consists of the six regions $(3,1),(3,5),(4,1),(4,5), (5,1),(5,5)$.

\begin{figure}[h]
\centering
\includegraphics[width=0.55\textwidth]{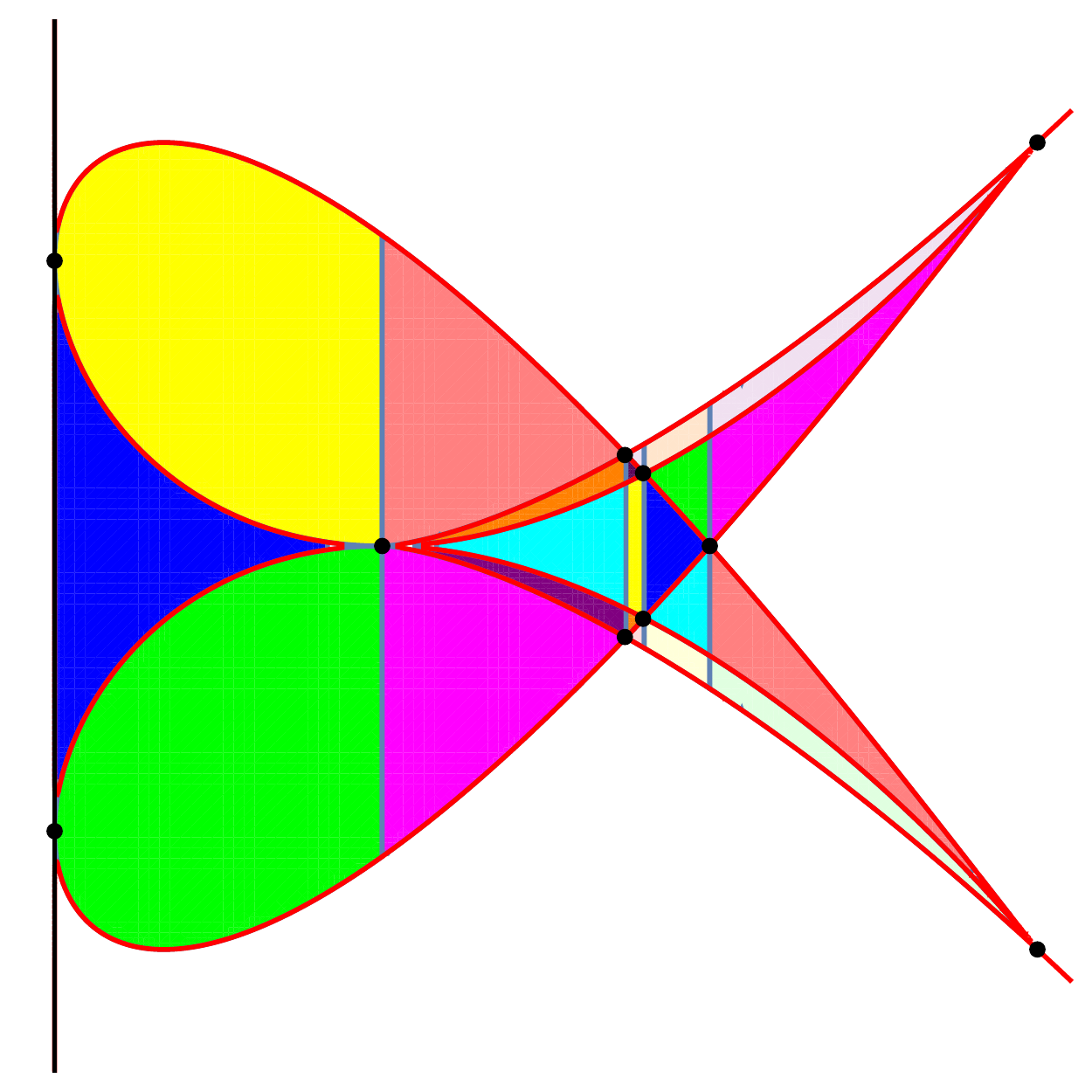}
\caption{
The flattened soccer ball in Examples~\ref{ex:fitness} and
\ref{example:fitness_example_with_our_algorithm} is divided into $22$ regions.
\label{figure:fitness_different_regions}}
\end{figure}

From our  output, we drew the picture 
in Figure~\ref{figure:fitness_different_regions}. For this, we  used
the coordinates of all singular points and branch points of $V(pq)$.
Branching occurs along the vertical tangent line $y = -\frac{1}{2}$.
The real singular points are $(-\frac{1}{2},-\frac{1}{3\sqrt{6}})$, $(-\frac{1}{2},\frac{1}{3\sqrt{6}})$,
$(0,0)$, $(\frac{16}{43},-\frac{64}{129\sqrt{129}})$, $(\frac{16}{43},\frac{64}{129\sqrt{129}})$, $(\frac{2}{5}$, $-\frac{2}{15\sqrt{15}})$, $(\frac{2}{5},\frac{2}{15\sqrt{15}})$, $(\frac{1}{2},0)$, $(1,-\frac{1}{3\sqrt{3}})$, $(1,\frac{1}{3\sqrt{3}})$.  Four pairs of critical points have the same $x$-coordinates, seen
in (\ref{eq:criticalvalues}).  The singular 
points and the vertical tangent line $y = -\frac{1}{2}$ are the black landmarks in  Figure~\ref{figure:fitness_different_regions}.
The $22$ regions are shown in different colors.  The six regions in $\phi(\mathcal{B}) \backslash \phi(\partial \mathcal{B})$ are colored light.
\end{example}

The output of Algorithm~\ref{algorithm:semialgebraic_sampling} can be interpreted as 
a semialgebraic formula for $\phi(\mathcal{B})$, as follows:
 Consider $p\cdot q$ as a univariate polynomial in $y$. Write down its Sturm sequence.
  Let $n$ denote the number of sign changes in that sequence evaluated at $-\infty$. The 
 formula for the region $(k,l)$ is a disjunction over all the possible sign assignments of the Sturm sequence with 
 $n-l$ sign changes, in conjunction with $x$ being between the $k$-th and $(k+1)$-st critical $x$-value. 
Such a formula will again be hard to read, just like the {\tt Mathematica} output
displayed at the end of Section \ref{sec:four}. A description like
Example \ref{example:fitness_example_with_our_algorithm},
accompanied  by a picture like Figure \ref{figure:fitness_different_regions}, seems to be
the most human-friendly way to represent  
the result of flattening a soccer ball.

We next discuss a more serious example,
where the {\tt Resolve} command does not terminate.
It will demonstrate the pros and cons of the exact symbolic approach.

\begin{figure}[h]
\centering
\includegraphics[width=0.5\textwidth]{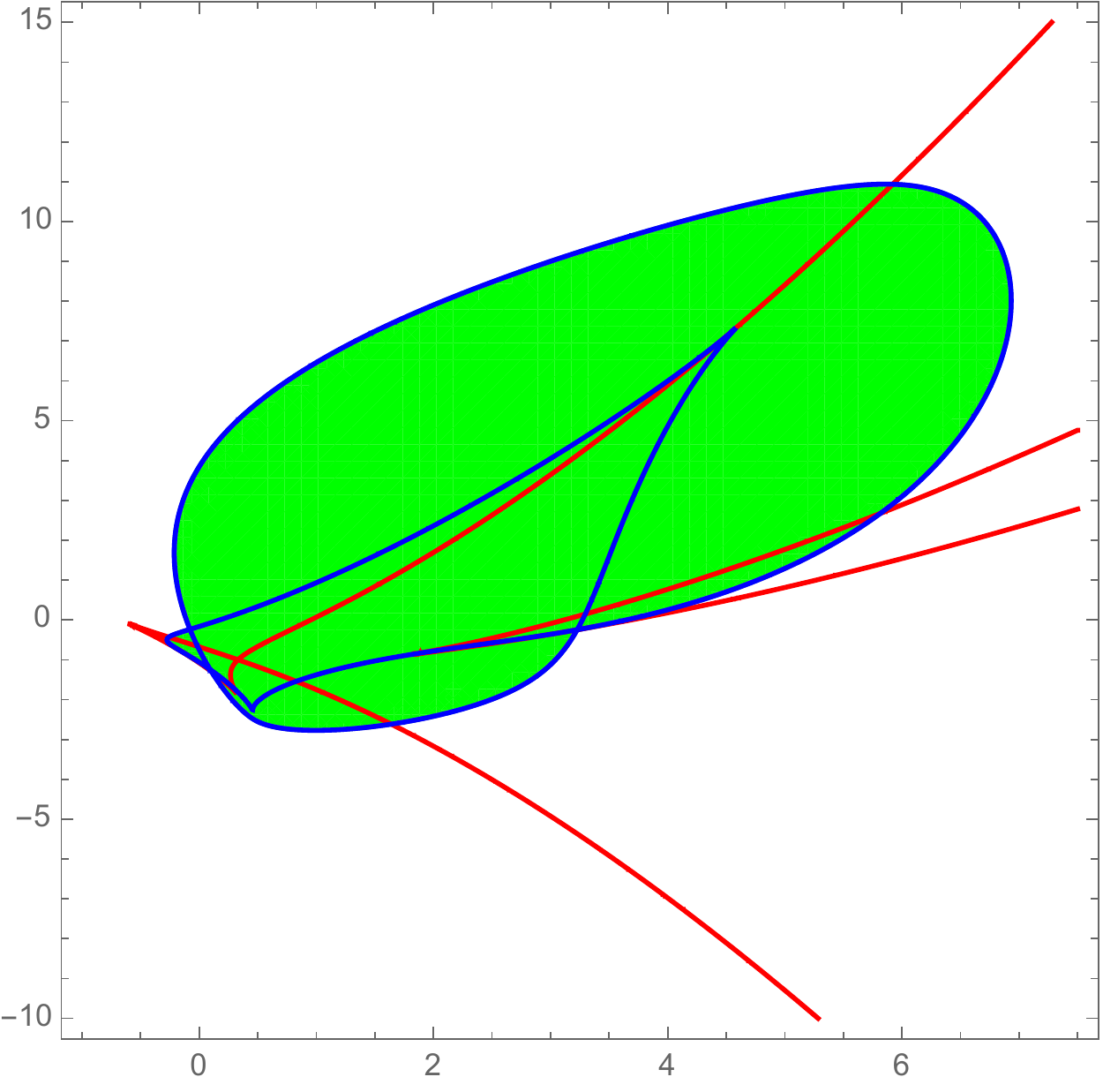}
\caption{The flattened soccer ball $\phi(\mathcal{B})$ in
Example~\ref{example:generic_example_algorithm} and its algebraic boundary.\label{figure:genericExample}}
\end{figure}

\begin{example}\label{example:generic_example_algorithm}
Fix $h = u^2+v^2+w^2-1$ as before, so $\mathcal{B}$ is the unit ball.
We select $f$ and $g$ randomly from
polynomials of degree $2$ and $3$ respectively.
This means we are now in the regime covered by
Theorem \ref{theorem:generic_degree_soccer}. Note 
the fourth line in Table \ref{tab:curves} marked $(d_1,d_2,e) = (2,3,2)$.
The following instance, picked for us by {\tt Macaulay2},
corresponds to the picture in Figure~\ref{figure:genericExample}:
$$
\begin{matrix}
f\,\,=\,\,\,&\frac{3}{5}u^2+uv+\frac{10}{3}v^2+\frac{7}{3}uw+\frac{1}{4}vw+\frac{3}{10}w^2+\frac{7}{4}u+\frac{8}{5}v+\frac{7}{5}w+\frac{10}{9},\\
g\,\,=\,\,\,&\frac{1}{4}u^3+3u^2v+uv^2+\frac{5}{3}v^3+u^2w+\frac{8}{5}uvw+\frac{4}{7}v^2w+\frac{7}{3}uw^2+\frac{7}{3}vw^2+\frac{7}{10}
      w^3+\\
&\frac{7}{2}u^2+3uv+\frac{5}{9}v^2+\frac{3}{8}uw+\frac{1}{9}vw+\frac{7}{4}w^2+\frac{9}{2}u+\frac{3}{4}v+\frac{5}{6}w+\frac{3}{7}.
\end{matrix}
$$

The polynomial $p$ that describes the branch locus of $\phi $
has degree $21$ and  $169$ terms. The polynomial $q$ that describes
the branch locus of $\phi_{\{h=0\}}$ has degree $24$ and $217$ terms. 
In both cases, these numbers count exactly the lattice points of the 
Newton triangles in Theorem~\ref{theorem:generic_degree_soccer}.
 
 For Step 3 we compute three univariate polynomials in $x$,  namely the
   resultants    in (\ref{eq:mustcompute}). 
 The resultant of $p$ and $\frac{\partial p}{\partial y}$  has degree $273$ with $13$ real roots. 
 The resultant of $p$ and $\frac{\partial q}{\partial y}$ has  degree $360$ with $22$ real roots.  
 The resultant of $p$ and $q$ has degree $336$ with $16$ real roots. 
 All $ 13+22+16$ real roots are different, so the total number of critical $x$-values is $51$.
 In Step 4 we sample points 
 from the cube $[-1,1]^3$ and from its boundary,
 and we record their images under $\phi$.
 In Step 5, we run over these points in $\R^2$, and we
identify the labels $(k,l)$ of the regions that contain these points.
Some regions are very small. It takes a long time to identify them.
We found that $\phi(\mathcal{B})$ is the union of the 
following $144$ regions:
\begin{small}
\begin{align*}
& (12, 2), (13, 2), (13, 3), (14, 2), (14,3), (14, 5), (15, 1), (15, 2), (15, 3), (15,5), (16, 1), (16, 2),\\
& (16, 3), (16, 4), (16,5), (17, 1), (17, 2), (17, 3), (17, 4), (17,5), (18, 1), (18, 2), (18, 3), (18, 4), \\
& (18,5), (19, 1), (19, 2), (19, 3), (19, 4), (19,5), (20, 1), (20, 2), (20, 3), (20, 4), (20,5), (20, 6),\\
& (20, 7), (21, 1), (21, 2), (21,3), (21, 4), (21, 5), (21, 6), (21, 7), (22,1), (22, 2), (22, 3), (22, 4),\\
& (22, 5), (23, 1), (23, 2), (23, 3), (23,4), (23, 5), (24, 1), (24, 2), (24, 3), (24,4), (24, 5), (24, 6), \\
& (24, 7), (25, 1), (25,2), (25, 3), (25, 4), (25, 5), (26,1), (26, 2), (26, 3), (26, 4), (26, 5), (27, 1),\\
& (27, 2), (27, 3), (27,4), (27, 5), (28, 2), (28, 3), (28, 4), (28,5), (29, 2), (29, 3), (29,4), (29, 5),\\
& (29, 6), (29, 7), (30, 2), (30,3), (30, 4), (30, 5), (30, 6), (30, 7), (31,2), (31, 3), (31, 4), (31, 5),\\
& (31, 6), (31,7), (32, 2), (32, 3), (32, 4), (32, 5), (32, 6), (32,7), (33, 3), (33, 4), (33, 5), (33, 6),\\
& (33, 7), (34,3), (34, 4), (34, 5), (34, 6), (34, 7), (35,3), (35, 4), (35, 5), (35, 6), (35, 7), (36,3), \\
& (36, 4), (36, 5), (36, 6), (36, 7), (37,3), (37, 4), (37, 5), (37, 6), (37, 7), (38,3), (38, 4), (38, 5),\\
& (38, 6), (38, 7), (39,3), (39, 4), (39, 5), (40, 3), (40,4), (40, 5), (41, 4), (41, 5), (42, 4), (43, 4).
\end{align*}
\end{small}
Out of the $50$ bounded segments between the critical $x$-values, 
precisely $32$ arise from $\mathcal{B}$.
The $11$ left-most segments and the $7$ right-most segments do not arise.
In the list above, the first pair $(12,2)$ refers to $x$-coordinates between critical $x$-values
labeled \#12 and \#13,
 which are approximately $-0.275436$ and $-0.2599$.
 The corresponding $y$-coordinates are between the
   $2$-nd and $3$-rd root of $p \cdot q$,
    regarded as a polynomial in $y$, with $x$ fixed in segment \#12.

\begin{figure}[h]
\centering
\begin{subfigure}[b]{0.45\textwidth}
\centering
\includegraphics[width=0.95\linewidth]{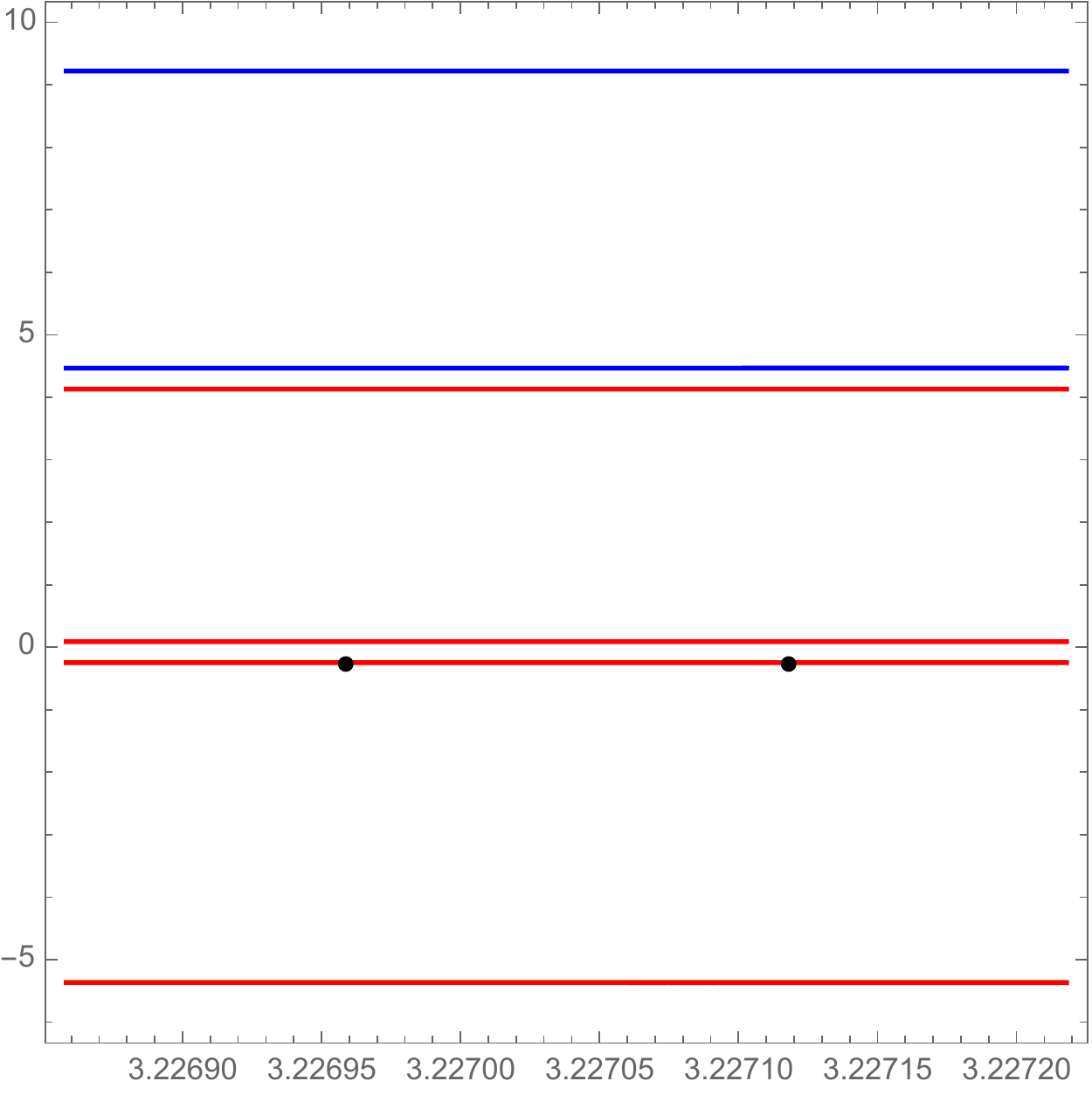}
\caption{Magnification of the $x$-axis only.}\label{figure:magnification1}
\end{subfigure} \qquad
\begin{subfigure}[b]{0.45\textwidth}
\centering
\includegraphics[width=\linewidth]{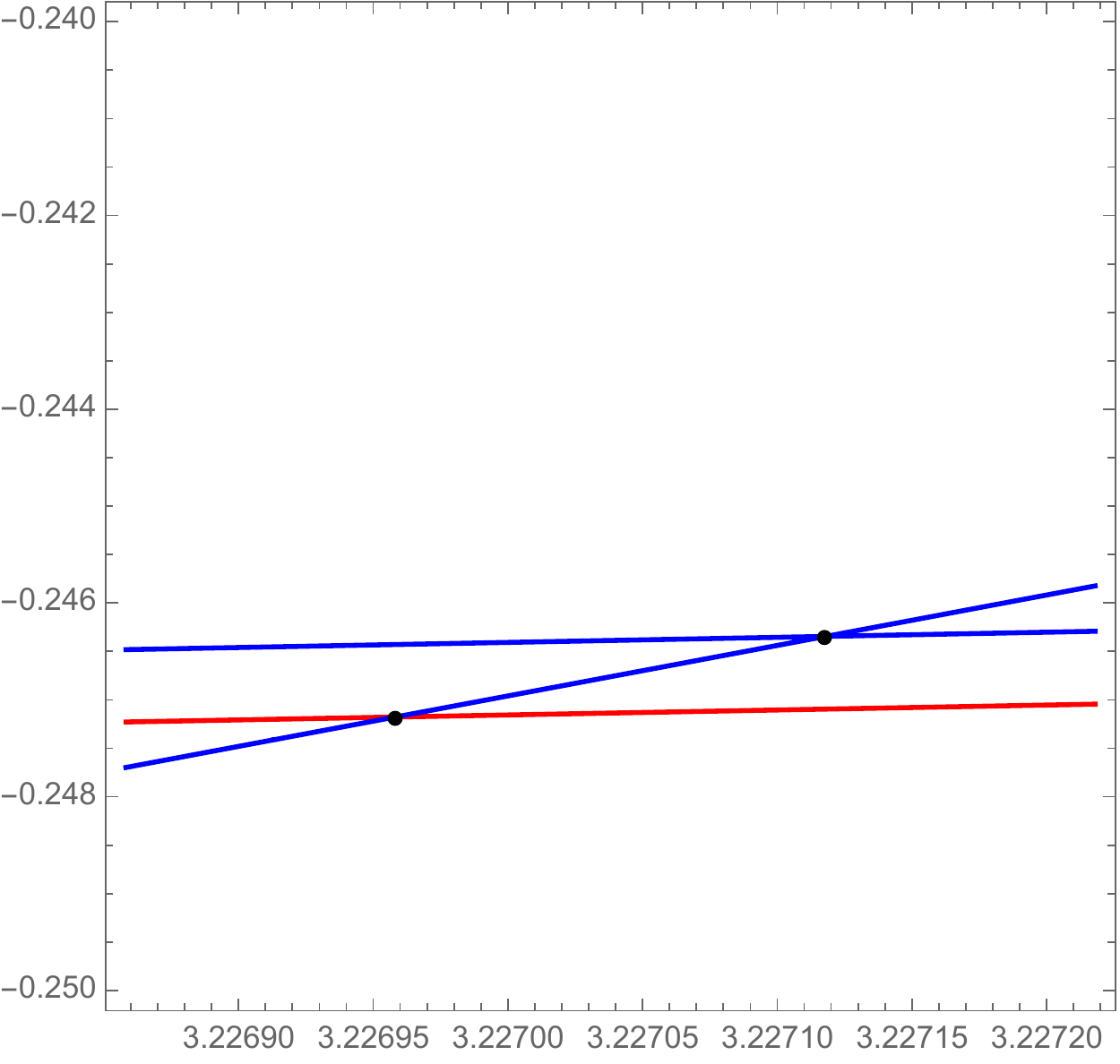}
\caption{Magnification of both coordinate axes.}\label{figure:magnification2}
\end{subfigure}
\caption{Magnifications between the 33rd and 34th critical points.}\label{figure:magnificationGenericExample}
\end{figure}

 Figure~\ref{figure:magnificationGenericExample} shows
 that the     situation is delicate. The regions can be
 very small, even for curves of degree $21$. For example, consider the critical $x$-values 
 labeled \#33 and \#34. They are
 $3.22696$ and $3.22712$. The unique critical point
over \#33 is an intersection point of $V(p)$ and $V(q)$.
 The unique critical point over \#34 is a node of $V(q)$.
 They are shown in Figure~\ref{figure:magnificationGenericExample}.

 In Figure~\ref{figure:magnification1} we scaled the $x$-axis so that it shows
 the vertical strip \#33. The $y$-axis is left unscaled, so that we can see
 all curve segments in that strip. It looks like the blue curve
 $V(p)$ has two segments and the red curve $V(q)$ has four segments.
 However,  two more blue segments 
 are eclipsed by the relevant red segment.
The truth becomes visible in Figure~\ref{figure:magnification2},
 where we also scale the $y$-axis. The critical point on the right
 is a node of the blue curve $V(q)$ and it does not lie on the red curve $V(p)$.
 The left critical point lies in $V(p,q)$.
 Our careful analysis also shows that $\phi(\mathcal{B}) \neq \phi(\partial \mathcal{B})$,  although this is not visible in Figure~\ref{figure:genericExample}.
Between critical $x$-values labeled \#15 and \#19, the lower
boundary is given by the red curve $V(p)$.
 \end{example}
  
\section{Convexity and Optimization}
\label{sec:five}

The problem of computing images of maps
is of considerable interest in polynomial optimization.
Magron, Henrion and Lasserre \cite{MHL}
developed a method for this based on outer approximations.
Our study is complementary to theirs,
in the sense that we do not consider approximations
but we seek exact descriptions. A related question is how to
compute and represent the convex hull of the image $\phi(\mathcal{B})$.
This issue will be addressed later in this section.

We start our discussion with a few examples
where both $\mathcal{B}$ and $\phi(\mathcal{B})$ are convex.
In what follows we retain the assumption that
$h = u^2+v^2+w^2-1$, so $\mathcal{B}$ is the unit ball in $\R^3$.
This can be folded into a convex polygon  in various interesting ways.

\begin{example} 
It is easy to find quadratic polynomials $f$ and $g$ such that
 $\phi = (f,g)$ maps the soccer ball onto a triangle or a rectangle.
Figure~\ref{fig:truesoccer} shows a map onto a square.
Here are two explicit maps that work.
If $ f = u^2+w^2$ and $g= v^2+w^2$
then $\phi(\mathcal{B})$ is the square with vertices
$(0,0),\,(0,1), \,(1,0), \,(1,1)$, and
$\phi(\partial \mathcal{B})$ is the triangle with vertices
$(0,1), \,(1,0), \,(1,1)$. The boundary polynomials are
$p = xy(x-y)$ and $q = (1-x)(1-y)(x+y-1)$.
For a second example let  $\phi$ be defined by
$ f = u^2+2v^2+w^2-1$ and $g = u^2+v^2+2w^2-1$.
Now the flattened soccer ball $\phi(\mathcal{B})$ is the triangle 
with vertices $(-1,-1),\,(0,1), \,(1,0)$,  while
$\phi(\partial \mathcal{B})$ is the triangle with vertices
$(0,0), \,(1,0), \,(0,1)$. The difference $\phi(\mathcal{B})\backslash \phi(\partial \mathcal{B})$
 is not a convex set.
\end{example}

If we pass from quadratic maps to cubic maps then we can create other polygons.

\begin{example} \label{ex:omega} 
Use the cubic Chebyshev polynomial $T_3(t) = 4t^3-3t$ to define $\phi$ via
\[
f(u,v,w) \,=\, \sqrt{3}(T_3(u)-T_3(v)) \qquad \hbox{and} \qquad
g(u,v,w) \,= \, T_3(u)+T_3(v)-2T_3(w).\]
The image $\phi(\mathcal{B})$ is the regular hexagon
 with vertices at  $(0,\pm 4)$ and $(\pm 2\sqrt{3},\pm 2)$.
\end{example}

This raises the question whether we can prescribe
$\phi(\mathcal{B})$ to be any polyhedral shape.
Ueno \cite{ueno1} proves that every unbounded convex polygon in
$\R^2$ is the image of $\R^2$ under a polynomial map.
It is not known, if his construction extends to
polynomial images of the unit ball.

\begin{problem}
Let $P$ be an arbitrary convex polygon in $\R^2$. Construct explicit
polynomials $f$ and $g$ in $\R[u,v,w]$ such that $P = \phi(\mathcal{B})$.
\end{problem}

Our next topic is the {\em flattenings of pancakes}.
These arise as special scenarios when we
flatten soccer balls. Indeed, suppose that the map
$\phi$ depends only on two of the variables,~say
$$ \phi \,: \,(u,v,w) \,\mapsto \,\bigl( \,f(u,v) ,\, g(u,v) \,\bigr). $$
Then the image of $\mathcal{B}$ is the same
as the image under $\phi'  = (f,g)$ of the unit disk 
$\,\mathcal{D}=\{(u,v) \in \mathbb{R}^2: u^2+v^2 \leq 1\}$.
In symbols, $\phi(\mathcal{B}) = \phi'(\mathcal{D})$.
The unit disk  $\mathcal{D}$ serves the role of our {\em pancake}.

 \begin{figure}[h]
\centering
\includegraphics[width=0.5\textwidth]{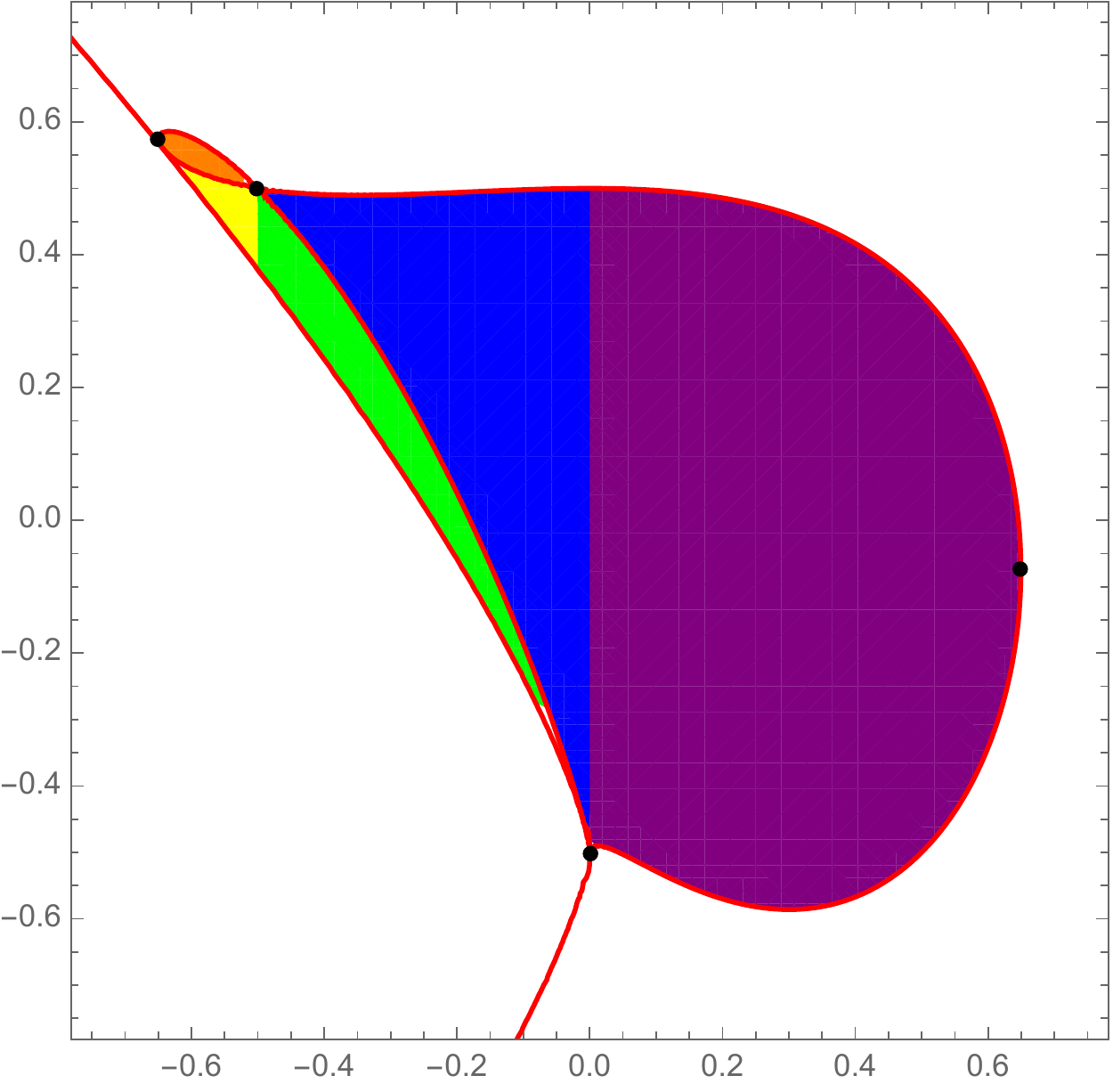}
\caption{\label{figure:pancake_with_critical_points}
This flattened pancake was discussed
 by Magron, Henrion and Lasserre~\cite{MHL}.}
\end{figure}

\begin{example}\label{example:pancake} 
Magron {\it et al.}~\cite[Example 1]{MHL} 
illustrate their method  for the map 
$$ f\,=\, \frac{1}{2}(u+uv) \quad \hbox{and}  \quad g\,=\, \frac{1}{2}(v-u^3). $$
It is instructive to compare their output with that of Algorithm \ref{algorithm:semialgebraic_sampling}.
The exact description  of the  flattened pancake
begins with the
two polynomials that define the algebraic boundary:
$$
\begin{matrix}
p & = & 2048x^3+432y^4+864y^3+648y^2+216y+27, \qquad \\
q & = & 64x^6+128x^5+96x^4+128x^3y-32x^3+192x^2y^2 - 44 x^2\\
 & & + 96xy^3+48xy^2-24xy-12x+16y^4+16y^3-4y-1 .\\
 \end{matrix}
 $$
The projection of the curve $V(pq)$ onto the $x$-axis has four critical points:
$$
\bigl(-\frac{3 \sqrt{3}}{8}, \frac{-473 + 264 \sqrt{3}}{6208 - 3600 \sqrt{3}} \bigr),\,\,
 \bigl(-\frac{1}{2}, \frac{1}{2}\bigr), \,\, \bigl(0, -\frac{1}{2} \bigr),\,\,
  \bigl(\frac{3 \sqrt{3}}{8}, \frac{-473 - 264 \sqrt{3}}{6208 + 3600 \sqrt{3}} \bigr).
$$
Our quantifier-free description of the planar region $\phi'(\cal{D})$ 
consists of the five pairs:
$$ (1,2)\,, \,\, (1,3) \,,\,\, (2,2) \,,\,\, (2,3) \,,\,\, (3,1) .$$
Figure~\ref{figure:pancake_with_critical_points} shows 
 $\phi'(\mathcal{D})$ where these
five regions are colored in yellow, orange, green, blue and purple.
The critical points are black, and the  boundary curve
$V(pq)$ is red.
 \end{example}

Outer approximations of the kind studied in \cite{MHL}
tend to work best when one is interested not in the image itself
but  in its convex hull  ${\rm conv}(\phi(\mathcal{B}))$.
Its extreme points are the solutions for
 the parametrized family of optimization problems
\begin{equation}
\gamma(\alpha,\beta) \,\,:= \,\,\max \{ 
\alpha x + \beta y \, : \,  (x,y) \in \phi(\mathcal{B}) \}.
\label{eq:mingamma}
\end{equation}
The function $\gamma$ is called the \emph{support function} of the set $\phi(\mathcal
{B})$. From the values of this function one obtains a description of the convex hull
of $\phi(\mathcal{B})$ as an intersection of closed halfspaces:
\[
\mathrm{conv}\bigl( \phi(\mathcal{B}) \bigr) \,\,= \bigcap_{(\alpha,\beta) \in \R^2} \, \{ (x,y) : \alpha x + \beta y \leq \gamma(\alpha,\beta) \}.
\]

The optimization problem in \eqref{eq:mingamma} can be
equivalently written as
\[
\gamma(\alpha,\beta) \,\,= \,\max_{(u,v,w)}  \, 
\alpha f(u,v,w) + \beta g(u,v,w) \qquad \mbox{s.t.} \quad h(u,v,w) \geq 0.
\]
Both the objective function and the constraint are given by polynomial
functions. Although in general this can be difficult to solve, good
upper bounds can be obtained using \emph{sum of squares} methods
\cite{BPT}. This method minimizes $\gamma_{SOS}$ subject~to
\[
\gamma_{SOS} - (\alpha f(u,v,w) + \beta g(u,v,w)) \,\,= \,\,s_0(u,v,w) + s_1(u,v,w) h(u,v,w),
\]
where $s_0$ and $s_1$ are \emph{sums of squares}. The solution satisfies
 the inequality $\gamma \leq \gamma_{SOS}$. Furthermore,
restricting the polynomials $s_0,s_1$ to have fixed degree, this is
a semidefinite optimization problem, so it can be solved efficiently.
Since $\gamma(\alpha,\beta) \leq \gamma_{SOS}(\alpha,\beta)$,
we obtain an outer approximation of the convex hull:
\begin{equation}
\label{eq:outerappr}
\mathrm{conv} \bigl( \phi(\mathcal{B})\bigr) \,\,\, \subseteq \,
\bigcap_{(\alpha,\beta) \in   \R^2} \, \bigl\{ (x,y) : \alpha x + \beta y \leq
\gamma_{SOS}(\alpha,\beta) \bigr\}.
\end{equation}
The right hand side is a {\it spectrahedral shadow}, so it is
a desirable set in the context of \cite{BPT}.
If one is lucky then equality holds in (\ref{eq:outerappr}) 
and a semidefinite representation of $\, \mathrm{conv} ( \phi(\mathcal{B}))\, $
has been obtained.
From our earlier algebraic perspective, such a representation still involves
quantifiers. Any quantifier-free formula has to account for
 supporting lines that were created when 
passing from  $\phi(\mathcal{B})$ to its convex hull.
Those lines are  bitangents of our curve $V(pq)$.

\acknowledgement{Pablo Parrilo was supported by AFOSR FA9550-11-1-0305.
  Bernd Sturmfels was supported by NSF grant  DMS-1419018
  and the Einstein Foundation Berlin. 
Part of this work was done while the authors visited the Simons
Institute for the Theory of Computing at UC Berkeley.}

%
%
%

\end{document}